\newtheorem{theorem}{Theorem}[section]
\newtheorem{proposition}[theorem]{Proposition}
\newtheorem{corollary}[theorem]{Corollary}
\newtheorem{lemma}[theorem]{Lemma}
\newtheorem{claim}[theorem]{Claim}
\newtheorem{conjecture}[theorem]{Conjecture}
\newtheorem*{weil}{Weil's Theorem}
\newtheorem*{bezout}{Bezout's Theorem}
\theoremstyle{definition}
\newtheorem{definition}[theorem]{Definition}
\newtheorem*{remark}{Remark}
\DeclareMathOperator{\re}{Re}
\DeclareMathOperator{\Tr}{Tr}
\DeclareMathOperator{\wt}{wt}
\newcommand{\F}{\mathbb{F}}
\newcommand{\K}{\mathbb{K}}
\newcommand{\Cn}{\mathbb{C}}
\renewcommand{\P}{\mathbb{P}}
\newcommand{\Z}{\mathbb{Z}}
\newcommand{\A}{\mathcal{A}}
\newcommand{\B}{\mathcal{B}}
\newcommand{\C}{\mathcal{C}}
\newcommand{\D}{\mathcal{D}}
\newcommand{\Pc}{\mathcal{P}}
\newcommand{\e}{\epsilon}
\newcommand{\abs}[1]{\lvert#1\rvert}
\newcommand{\Biggabs}[1]{\Bigg\lvert#1\Bigg\rvert}
\begin{document}

\title{Planar functions over fields of characteristic two}

\date{29 January 2013 (revised 19 December 2013)}

\author{Kai-Uwe Schmidt}
\address{Faculty of Mathematics, Otto-von-Guericke University, Universit\"atsplatz~2, 39106 Magdeburg, Germany}
\email{kaiuwe.schmidt@ovgu.de}

\author{Yue Zhou}
\address{Faculty of Mathematics, Otto-von-Guericke University, Universit\"atsplatz~2, 39106 Magdeburg, Germany}
\curraddr{Department of Mathematics and System Sciences, College of Science, National University of Defense Technology, Changsha, China}
\email{yue.zhou.ovgu@gmail.com}

\begin{abstract}
Classical planar functions are functions from a finite field to itself and give rise to finite projective planes. They exist however only for fields of odd characteristic. We study their natural counterparts in characteristic two, which we also call planar functions. They again give rise to finite projective planes, as recently shown by the second author. We give a characterisation of planar functions in characteristic two in terms of codes over $\Z_4$. We then specialise to planar monomial functions $f(x)=cx^t$ and present constructions and partial results towards their classification. In particular, we show that $t=1$ is the only odd exponent for which $f(x)=cx^t$ is planar (for some nonzero $c$) over infinitely many fields. The proof techniques involve methods from algebraic geometry.
\end{abstract}

\maketitle


\section{Introduction}

A function $f:\F_q\to\F_q$ is \emph{planar} if
\begin{equation}
x\mapsto f(x+\e)-f(x)   \label{eqn:def_pn}
\end{equation}
is a permutation of $\F_q$ for each $\e\in\F_q^*$. Planar functions have been introduced by Dembowski and Ostrom~\cite{DemOst1968} to construct finite projective planes and arise in many other contexts. For example, Ganley and Spence~\cite{GanSpe1975} showed that planar functions give rise to certain relative difference sets, Nyberg and Knudsen~\cite{NybKnu1993}, among others, studied planar functions (under the synonym \emph{perfect nonlinear functions}) for applications in cryptography, and Carlet, Ding, and Yuan~\cite{CarDingYua2005}, among others, used planar functions to construct error-correcting codes.
\par
Planar functions cannot exist in characteristic two since, if $q$ is even and $x$ is a solution to $f(x+\e)-f(x)=a$ for $a\in\F_q$, then so is $x+\e$. This is the motivation to define a function $f:\F_q\to\F_q$ to be \emph{almost perfect nonlinear} if~\eqref{eqn:def_pn} is a $2$-to-$1$ map. Such functions have also been studied extensively for applications in cryptography and coding theory (see Carlet, Charpin, and Zinoviev~\cite{CarChaZin1998}, for example). However, there is no apparent link between almost perfect nonlinear functions and finite projective planes.
\par
Recently, the second author proposed~\cite{Zho2013} a concept to overcome the problem that there is no planar function in characteristic two. The definition of a planar function has to be modified as follows. 
\begin{definition}
A function $f:\F_{2^n}\to\F_{2^n}$ is \emph{planar} if
\begin{equation}
x\mapsto f(x+\e)+f(x)+\e x   \label{eqn:planar_def}
\end{equation}
is a permutation of $\F_{2^n}$ for each $\e\in\F_{2^n}^*$.
\end{definition}
Such functions share many of the properties of planar functions in odd characteristic. The next section, which is independent of the rest of this paper, provides further background on planar functions in characteristic two and discusses connections to finite geometries and coding theory.
\par
Every function from $\F_{2^n}$ to itself can be uniquely written as a polynomial function of degree strictly less than $2^n$. We consider the simplest nontrivial polynomial functions, namely monomial functions $x\mapsto cx^t$ for some $c\in\F_{2^n}^*$ and some integer $t$. Such functions are often preferred in applications. We are interested in those exponents $t$ that give rise planar functions.
\begin{definition}
An integer $t$ satisfying $0<t<2^n$ is a \emph{planar exponent} of $\F_{2^n}$ if the function $x\mapsto cx^t$ is planar on $\F_{2^n}$ for some $c\in\F_{2^n}^*$.
\end{definition}
\par
Trivially, $2^k$ is a planar exponent of all fields $\F_{2^n}$ satisfying $n>k$. A nontrivial example is given in Theorem~\ref{thm:planar_monomial_1}, which shows that $2^k+1$ is a planar exponent of $\F_{4^k}$. In an earlier version of this paper, we conjectured that $4^k(4^k+1)$ is a planar exponent of $\F_{64^k}$. This was subsequently proved by Scherr and Zieve~\cite{SchZie2013}. We conjecture that these examples, summarised in Table~\ref{tab:planar_exponents}, form the complete list of planar exponents.
\begin{table}[h]
\centering
\caption{Conjectured complete list of planar exponents of $\F_{2^n}$}
\label{tab:planar_exponents}
\begin{tabular}{c|c|c}
\hline
Exponent $t$ & Condition & Reference\\\hline\hline
$2^k$        & none    & trivial\\
$2^k+1$      & $n=2k$  & Theorem~\ref{thm:planar_monomial_1}\\
$4^k(4^k+1)$ & $n=6k$  & \cite[Theorem 1.1]{SchZie2013} \\\hline
\end{tabular}
\end{table}
\par
As in odd characteristic, the classification of planar monomials in characteristic two seems to be a challenging problem. This motivates us to study the relaxed problem of classifying those numbers that are planar exponents of $\F_{2^n}$ for infinitely many $n$. The only known such numbers are the powers of $2$ and we conjecture that there are no more. Our main result is the following.
\par
\begin{theorem}
\label{thm:exceptional_monomials_odd}
If $t$ is an odd planar exponent of $\F_{2^n}$ for infinitely many $n$, then $t=1$.
\end{theorem}
\par
The problem of classifying the numbers that are planar exponents of $\F_{2^n}$ for infinitely many $n$ parallels the problem of classifying monomial functions $x\mapsto x^t$ on $\F_{2^n}$ that are almost perfect nonlinear for infinitely many~$n$. To attack this problem, Janwa, McGuire, and Wilson~\cite{JanMcGWil1995} proposed to use ideas from algebraic geometry. These ideas were further developed by Jedlicka~\cite{Jed2008} and Hernando and McGuire~\cite{HerMcG2011}, leading to a complete solution. The same approach has been used by Hernando and McGuire~\cite{HerMcG2012} to prove a conjecture on monomial hyperovals in projective planes and by Leducq~\cite{Led2012} and Hernando, McGuire, and Monserrat~\cite{HerMcGMon2013} to give partial results towards a classification of monomial functions $x\mapsto x^t$ on $\F_{p^n}$ (with $p$ odd) that are planar for infinitely many~$n$ (which was recently completed by Zieve~\cite{Zie2013} using different techniques). We use a similar approach to prove Theorem~\ref{thm:exceptional_monomials_odd}, though our proof requires several extra ideas.


\section{Background and Motivation}
\label{sec:background}

\subsection{Relative difference sets and finite geometries}

Let $G$ be a finite group and let $N$ be a subgroup of $G$. A subset $D$ of $G$ is a \emph{relative difference set} with parameters $(\abs{G}/\abs{N},\abs{N},\abs{D},\lambda)$ and \emph{forbidden subgroup} $N$ if the list of nonzero differences of $D$ comprises every element in $G\setminus N$ exactly $\lambda$ times. We are interested in relative difference sets $D$ with parameters $(q,q,q,1)$ and a normal forbidden subgroup, in which case a classical result due to Ganley and Spence~\cite[Theorem~3.1]{GanSpe1975} shows that $D$ can be uniquely extended to a finite projective plane.
\par
It is known~\cite{Gan1976},~\cite{Jun1987} that, for even $q$, a relative difference set with parameters $(q,q,q,1)$ in an abelian group necessarily satisfies $q=2^n$ for some integer~$n$ and is a subset of $\Z_4^n$ (where $\Z_4=\Z/4\Z$) and the forbidden subgroup is $2\Z_4^n$. This fact was the motivation for the second author to study~\cite{Zho2013} such relative difference sets, which then led to the notion of planar functions over fields of characteristic two.
\par
We shall follow an approach that is slightly different from that in~\cite{Zho2013} and identify $\Z_4^n$ with the additive group of the Galois ring $R_n$ of characteristic $4$ and cardinality $4^n$. We recall some basic facts about such Galois rings (see~\cite{Nec1991} or~\cite{HamKumCalSloSol1994}, for example). The unit group $R_n\setminus 2R_n$ of $R_n$ contains a cyclic subgroup $\Gamma(R_n)^*$ of size $2^n-1$ and $\Gamma(R_n)=\Gamma(R_n)^*\cup\{0\}$ is called the \emph{Teichmuller set} in $R_n$. We define addition on $\Gamma(R_n)$ by
\begin{equation}
x\oplus y=x+y+2\sqrt{xy}   \label{eqn:def_oplus}
\end{equation}
(where $+$ is addition in $R_n$). Then $(\Gamma(R_n),\oplus,\,\cdot\,)$ is a finite field with $2^n$ elements~\cite[Statement~2]{Nec1991}. Every $y\in R_n$ can be written uniquely in the form $y=a+2b$ for $a,b\in\Gamma(R_n)$.
\par
It is now an easy exercise to show that a relative difference set in $R_n$ with parameters $(2^n,2^n,2^n,1)$ can always be written as
\begin{equation}
D=\{x+2\sqrt{f(x)}:x\in\Gamma(R_n)\},   \label{eqn:diff_set}
\end{equation}
where $f$ is some function from $\Gamma(R_n)$ to itself. The following result characterises the functions $f$ for which~\eqref{eqn:diff_set} is a relative difference set.
\begin{theorem}
\label{thm:diff_set}
The set $D$, given in~\eqref{eqn:diff_set}, is a relative difference set with parameters $(2^n,2^n,2^n,1)$ and forbidden subgroup $2R_n$ if and only if $f$ is planar.
\end{theorem}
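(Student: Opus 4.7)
The plan is to translate the defining property of a relative difference set directly into a statement about the planar map, by expanding $d_1-d_2$ with $d_i=x_i+2\sqrt{f(x_i)}$ into its canonical form $a+2b$ with $a,b\in\Gamma(R_n)$ and reading off the two halves.

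First I would unpack the RDS condition: $D$ has parameters $(2^n,2^n,2^n,1)$ and forbidden subgroup $2R_n$ precisely when, for every $g\in R_n\setminus 2R_n$, the equation $d_1-d_2=g$ has exactly one solution $(d_1,d_2)\in D\times D$, and for every $g\in 2R_n\setminus\{0\}$ it has none. Next I would compute $d_1-d_2$ in $R_n$. Using $-1=1+2$ (so $-x_2=x_2+2x_2$ and $-2\sqrt{f(x_2)}=2\sqrt{f(x_2)}$) together with the identity $x_1+x_2=(x_1\oplus x_2)+2\sqrt{x_1x_2}$ rearranged from~\eqref{eqn:def_oplus}, I would obtain
\[
d_1-d_2 \;=\; (x_1\oplus x_2)+2\bigl[\sqrt{x_1x_2}+x_2+\sqrt{f(x_1)}+\sqrt{f(x_2)}\bigr].
\]
Because the bracket is multiplied by $2$, only its image in $R_n/2R_n\cong(\Gamma(R_n),\oplus,\cdot)=\F_{2^n}$ matters, and that image is $\sqrt{x_1x_2}\oplus x_2\oplus\sqrt{f(x_1)}\oplus\sqrt{f(x_2)}$.

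Setting $g=x_1\oplus x_2$, I would split into cases. When $g=0$, characteristic $2$ forces $x_1=x_2$ and hence $d_1=d_2$, so no nonzero element of $2R_n$ ever arises as a difference; the forbidden-subgroup condition is therefore automatic, independently of $f$. When $g\ne 0$, I would parametrise the fibre $x_1\oplus x_2=g$ by $x=x_2$ and $x_1=x\oplus g$; then the number of representations $d_1-d_2=g+2h$ equals the number of $x\in\F_{2^n}$ solving
\[
h \;=\; \sqrt{x(x\oplus g)}\oplus x\oplus\sqrt{f(x\oplus g)}\oplus\sqrt{f(x)}\;=:\;h_g(x),
\]
so the RDS condition collapses to the claim that $h_g$ is a permutation of $\F_{2^n}$ for every $g\in\F_{2^n}^*$. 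Since Frobenius squaring is a bijection on $\F_{2^n}$, this is equivalent to $h_g^2$ being a permutation for every such $g$, and a direct expansion gives
\[
h_g(x)^2 \;=\; x(x\oplus g)\oplus x^2\oplus f(x\oplus g)\oplus f(x) \;=\; f(x\oplus g)\oplus f(x)\oplus gx,
\]
which is exactly the map in~\eqref{eqn:planar_def}. Both directions of the equivalence then follow simultaneously.

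The only real subtlety is the bookkeeping between the two additions, $+$ in $R_n$ and $\oplus$ in $\Gamma(R_n)$, together with the fact that $-1=1+2$ rather than $1$ in $R_n$. Once these are kept straight, the planar condition falls out of the $h_g^2$ calculation essentially for free, with no ingredients beyond the elementary structure theory of the Galois ring.
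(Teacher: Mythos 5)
Your proposal is correct and follows essentially the same route as the paper: both decompose $d_1-d_2$ into its canonical form $a+2b$ over the Teichmuller set, reduce the RDS condition to the simultaneous system $x_1\oplus x_2=a$ and $\sqrt{x_1x_2}\oplus x_2\oplus\sqrt{f(x_1)}\oplus\sqrt{f(x_2)}=b$, and identify this with the planarity of $f$ on $(\Gamma(R_n),\oplus,\cdot)$. Your version merely spells out the squaring step and the (automatic) forbidden-subgroup check that the paper leaves implicit.
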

\begin{proof}
By definition, $D$ is a relative difference set with parameters $(2^n,2^n,2^n,1)$ and forbidden subgroup $2R_n$ if and only if, for every $c\in R\setminus 2R$, the equation
\[
(x+2\sqrt{f(x)})-(y+2\sqrt{f(y)})=c
\]
has exactly one solution $(x,y)\in\Gamma(R_n)\times\Gamma(R_n)$. Equivalently, writing $c=a+2b$ for $a\in\Gamma(R_n)^*$ and $b\in\Gamma(R_n)$, the two equations
\begin{gather*}
x\oplus y=a\\
\sqrt{f(x)}\oplus\sqrt{f(y)}\oplus \sqrt{xy}\oplus y=b
\end{gather*}
hold simultaneously for exactly one pair $(x,y)\in \Gamma(R_n)\times \Gamma(R_n)$. This in turn holds if and only if the mapping
\[
x\mapsto f(x\oplus a)\oplus f(x)\oplus ax
\]
is a permutation of $\Gamma(R_n)$ for every $a\ne 0$.
\end{proof}
\par
\begin{remark}
Theorem~\ref{thm:diff_set} is essentially equivalent to~\cite[Theorem~2.1]{Zho2013}, which avoids using Galois rings at the cost of a more delicate proof.
\end{remark}
\par
Let $\chi:R_n\to\Cn$ be a character of the additive group of $R_n$. For later reference, we recall the following standard result (see~\cite[Ch.~1]{Pot1995}, for example): $D$ is a relative difference set in $R_n$ with forbidden subgroup $2R_n$ if and only~if
\begin{equation}
\Biggabs{\sum_{x\in D}\chi(x)}^2=\begin{cases}
4^n & \text{for $\chi$ principal}\\
0   & \text{for $\chi$ not principal, but principal on $2R_n$}\\
2^n & \text{otherwise}.
\end{cases}   \label{eqn:diff_set_char}
\end{equation}

\subsection{Coding theory}

We assume that the reader is familiar with the basic terminology of coding theory, in particular of the theory of codes over $\Z_4$. Otherwise, we advise to consult the seminal paper~\cite{HamKumCalSloSol1994}.
\par
Let $f$ be a function from $\F_{2^n}$ to itself satisfying $f(0)=0$ and let $\alpha$ be a generator of $\F_{2^n}^*$. It is well known (see~\cite[Theorem~5]{CarChaZin1998}, for example) that for $n\ge 4$ the code over $\F_2$ having parity check matrix
\begin{equation}
\begin{bmatrix}
1    & \alpha    & \alpha^2    & \cdots & \alpha^{2^n-2}\\
f(1) & f(\alpha) & f(\alpha^2) & \cdots & f(\alpha^{2^n-2})
\end{bmatrix}   \label{eqn:pc_matrix_BCH}
\end{equation}
has minimum (Hamming) distance $3$, $4$, or $5$, where the value~$5$ occurs if and only if $f$ is almost perfect nonlinear. We shall provide a similar characterisation for planar functions in characteristic two. 
\par
Let $f$ be a function from $\Gamma(R_n)$ to itself and let $\beta$ be a generator of $\Gamma(R_n)^*$. Consider the code $\C_f$ over $\Z_4$ having parity check matrix
\[
\begin{bmatrix}
1            & 1              & 1                      & \cdots & 1\\
2\sqrt{f(0)} & 1+2\sqrt{f(1)} & \beta+2\sqrt{f(\beta)} & \cdots & \beta^{2^n-2}+2\sqrt{f(\beta^{2^n-2})}
\end{bmatrix}.
\]
This code and its dual are free $\Z_4$-modules of rank $4^{2^n-n-1}$ and $4^{n+1}$, respectively.
\par
We remind the reader that the \emph{Lee weights} of $0,1,2,3\in\Z_4$ are $0,1,2,1$, respectively, and the \emph{Lee weight} $wt_L(c)$ of $c\in(\Z_4)^N$ is the sum of the Lee weights of its components. This weight function defines a metric in $(\Z_4)^N$, called the \emph{Lee distance}.
\par
Write $\C$ for the code $\C_f$ when $f$ is identically zero (in which case $f$ is planar). The dual code $\C^\perp$ is the $\Z_4$-Kerdock code described in~\cite{HamKumCalSloSol1994}. Let
\[
\phi:(\Z_4)^N\to(\F_2)^{2N}
\]
be the Gray map, which defines an isometry between $(\Z_4)^N$, equipped with the Lee distance, and $(\F_2)^{2N}$, equipped with the Hamming distance. Then, for $n\ge 3$ odd, $\phi(\C^\perp)$ is the classical Kerdock code and $\phi(\C)$ has the same parameters as the Preparata code (see~\cite{HamKumCalSloSol1994} for details on these codes).
\par
The Lee weight distribution of $\C^\perp$ has been determined in~\cite{HamKumCalSloSol1994}. The following more general result gives a characterisation of planar functions.
\begin{theorem}
\label{thm:C_weight_distribution}
The code $(\C_f)^\perp$ has the same Lee weight distribution as $\C^\perp$ if and only if $f$ is planar. In particular, if $f$ is planar, the Lee weight distribution of $(\C_f)^\perp$ is given in Table~\ref{tab:weight_distribution_n_odd} for odd $n$ and in Table~\ref{tab:weight_distribution_n_even} for even~$n$.
\begin{table}[t]
\centering
\caption{Weight distribution of $(\C_f)^\perp$ for odd $n$.}
\label{tab:weight_distribution_n_odd}
\begin{tabular}{c|c}
\hline
\text{weight} & \text{frequency}\\\hline\hline
$0$               & $1$\\
$2^n-2^{(n-1)/2}$ & $2^{n+1}(2^n-1)$\\
$2^n$             & $2^{n+2}-2$\\
$2^n+2^{(n-1)/2}$ & $2^{n+1}(2^n-1)$\\
$2^{n+1}$         & $1$\\\hline
\end{tabular}
\end{table}
\begin{table}[t]
\centering
\caption{Weight distribution of $(\C_f)^\perp$ for even $n$.}
\label{tab:weight_distribution_n_even}
\begin{tabular}{c|c}
\hline
\text{weight} & \text{frequency}\\\hline\hline
$0$           & $1$\\
$2^n-2^{n/2}$ & $2^n(2^n-1)$\\
$2^n$         & $2^{n+1}(2^n+1)-2$\\
$2^n+2^{n/2}$ & $2^n(2^n-1)$\\
$2^{n+1}$     & $1$\\\hline
\end{tabular}
\end{table}
\end{theorem}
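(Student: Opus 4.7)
The plan is to parametrize the codewords of $(\C_f)^\perp$ by pairs $(a,b)\in\Z_4\times R_n$ and reduce their Lee weights to character sums over the Galois ring, after which the characterisation~\eqref{eqn:diff_set_char} together with Theorem~\ref{thm:diff_set} will translate planarity into a magnitude condition on these sums. Since every $\Z_4$-linear map $R_n\to\Z_4$ is of the form $y\mapsto\Tr(by)$ for a unique $b\in R_n$, the generic codeword of $(\C_f)^\perp$ has the shape $d_{a,b}=\big(a+\Tr(b(x+2\sqrt{f(x)}))\big)_{x\in\Gamma(R_n)}$. The identity $\re(i^c)=1-wt_L(c)$ for $c\in\Z_4$ then yields
\[
wt_L(d_{a,b})=2^n-\re\!\big(i^a S(b)\big),\qquad S(b):=\sum_{x\in\Gamma(R_n)}\chi\!\big(b(x+2\sqrt{f(x)})\big),
\]
with $\chi(y)=i^{\Tr(y)}$, so that the four codewords $d_{0,b},\dots,d_{3,b}$ carry weights $2^n\pm\alpha$ and $2^n\pm\beta$, where $\alpha=\re S(b)$ and $\beta=\im S(b)$.

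I would next split on $b$. For $b=0$, $S(0)=2^n$ produces the weights $0,2^n,2^{n+1},2^n$. For $b=2b'\in 2R_n\setminus\{0\}$, the identity $4=0$ in $R_n$ collapses $S(b)$ to a nontrivial additive character sum over $\F_{2^n}$ (through the bijection $\Gamma(R_n)\leftrightarrow\F_{2^n}$ and the reduction $\Tr\bmod 2=\Tr_{\F_{2^n}/\F_2}$), which vanishes; hence each such $b$ contributes four codewords of weight $2^n$. Independently of $f$, these two base cases already account for the unique weight-$0$ and weight-$2^{n+1}$ codewords and for exactly $2^{n+2}-2$ of the weight-$2^n$ codewords appearing in both target tables.

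By~\eqref{eqn:diff_set_char} and Theorem~\ref{thm:diff_set}, $f$ is planar if and only if $|S(b)|^2=2^n$ for every $b\in R_n\setminus 2R_n$. Since $S(b)$ is a sum of $2^n$ fourth roots of unity, $\alpha$ and $\beta$ are integers with $\alpha^2+\beta^2=2^n$. The Gaussian integers of norm $2^n$ are the unit multiples of $(1+i)^n$, so for $n$ odd one finds $(\alpha,\beta)\in\{(\pm 2^{(n-1)/2},\pm 2^{(n-1)/2})\}$ and for $n$ even $(\alpha,\beta)\in\{(\pm 2^{n/2},0),(0,\pm 2^{n/2})\}$. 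Summing the four per-$b$ weights over the $2^n(2^n-1)$ values of $b\notin 2R_n$ and adding the base-case contributions reproduces Tables~\ref{tab:weight_distribution_n_odd} and~\ref{tab:weight_distribution_n_even}, which proves the forward implication.

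For the converse, assume $(\C_f)^\perp$ has the stated distribution. Since the target has frequency $1$ at weights $0$ and $2^{n+1}$, these can only arise from $b=0$; hence for every $b\ne 0$ the four weights of the $d_{a,b}$ lie in $\{2^n-\delta,2^n,2^n+\delta\}$, with $\delta=2^{(n-1)/2}$ ($n$ odd) or $\delta=2^{n/2}$ ($n$ even), so $\alpha,\beta\in\{-\delta,0,\delta\}$ and $|S(b)|^2\in\{0,\delta^2,2\delta^2\}$. For $n$ odd this forces $|S(b)|^2\le 2^n$, and Parseval's identity $\sum_{b\notin 2R_n}|S(b)|^2=4^n(2^n-1)$ distributed over $2^n(2^n-1)$ terms (average $2^n$) immediately forces $|S(b)|^2=2^n$ throughout, after which~\eqref{eqn:diff_set_char} and Theorem~\ref{thm:diff_set} yield planarity. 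The delicate case is $n$ even: here $\{0,2^n,2^{n+1}\}$ is compatible with Parseval in a one-parameter family, and I would have to complete the argument by directly matching the individual target counts at weights $2^n-\delta$, $2^n$, and $2^n+\delta$ against the per-$b$ contributions of the configurations $(\pm\delta,\pm\delta)$, $(\pm\delta,0)$, $(0,\pm\delta)$, $(0,0)$, exploiting the $\Gamma(R_n)^*$-symmetry $b\mapsto\lambda b$ of the character sums (and, if needed, higher moments $\sum_b|S(b)|^{2m}$ expressed combinatorially in terms of solutions to $z_{x_1}+z_{x_2}=z_{x_3}+z_{x_4}$ in $R_n$) to pin down $|S(b)|^2=2^n$ for every $b\notin 2R_n$; this last step is where I expect the main technical difficulty to lie.
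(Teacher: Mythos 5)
Your forward direction is correct and is essentially the paper's own argument: parametrise the dual codewords by pairs, reduce Lee weights to the character sums $S(b)$ via $\wt_L=2^n-\re(\omega^a S(b))$, invoke Theorem~\ref{thm:diff_set} together with~\eqref{eqn:diff_set_char}, and enumerate the possible values of $S(b)$ by the two-square theorem; your observation that $S(b)=0$ for $b\in 2R_n\setminus\{0\}$ holds for \emph{every} $f$ is a correct and useful refinement. Your converse for odd $n$ (weights $0$ and $2^{n+1}$ force $\re S(b),\im S(b)\in\{0,\pm 2^{(n-1)/2}\}$, hence $\abs{S(b)}^2\le 2^n$, and Parseval forces equality) is also correct, and in fact supplies detail that the paper compresses into the single sentence ``it follows easily from~\eqref{eqn:Lee_weight_cab} and~\eqref{eqn:characterisation_of_f}''.

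The genuine gap is exactly where you place it: the converse for even $n$, and the completion you sketch will not close it. Write $\delta=2^{n/2}$ and let $N_0,N_1,N_2$ count the $b\notin 2R_n$ with $\abs{S(b)}^2=0,\,2^n,\,2^{n+1}$ (the only values compatible with the target weight set). Matching the frequencies at weights $2^n\pm\delta$ and $2^n$ yields only $N_1+2N_2=2^n(2^n-1)$ (which is Parseval again) and $N_0=N_2$, leaving the one-parameter family $N_0=N_2=s$ you anticipated; the identity $\sum_{b\notin 2R_n}S(b)^2=0$ adds nothing that excludes $s>0$. Moreover the higher-moment route is circular: $\sum_{b\in R_n}\abs{S(b)}^4$ equals $4^n$ times the number of quadruples with $z_{x_1}+z_{x_2}=z_{x_3}+z_{x_4}$, where $z_x=x+2\sqrt{f(x)}$, and one computes that $s$ is exactly half the number of \emph{nontrivial} such quadruples --- but the absence of nontrivial quadruples is precisely the relative-difference-set property, i.e.\ planarity, so this moment is not known a priori. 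You therefore need an input beyond Lee-weight frequencies and power moments of $\abs{S(b)}$ to force $s=0$ when $n$ is even. (In fairness, the paper's own proof of this direction is the one-line assertion quoted above and does not spell out the even case either; but as submitted, your proof of the ``only if'' direction is incomplete for even $n$.)
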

\begin{proof}
Let $\omega$ be a primitive fourth root of unity. If $c=(c_1,\dots,c_N)$ is an element of $(\Z_4)^N$, then its Lee weight satisfies
\begin{equation}
\wt_L(c)=N-\re\bigg(\sum_{i=1}^N\omega^{c_i}\bigg).   \label{eqn:def_Lee_weight}
\end{equation}
Let $T:R_n\to\Z_4$ be the absolute trace function on $R_n$. We shall index elements of codewords by $\Gamma(R_n)$. For $a\in R_n$ and $b\in\Z_4$, consider the codeword
\[
c_{a,b}=\big(T\big(a(x+2\sqrt{f(x)})\big)+b\big)_{x\in\Gamma(R_n)}.
\]
By a folklore generalisation of Delsarte's Theorem~\cite[p.~208]{MacSlo1977} to codes over $\Z_4$, these are exactly the $4^{n+1}$ codewords of $(\C_f)^\perp$. From~\eqref{eqn:def_Lee_weight} we have
\begin{equation}
\wt_L(c_{a,b})=2^n-\re(\omega^b S_a),   \label{eqn:Lee_weight_cab}
\end{equation}
where
\[
S_a=\sum_{x\in\Gamma(R_n)}\omega^{T(a(x+2\sqrt{f(x)}))}.
\]
Since $z\mapsto \omega^{T(az)}$ are exactly the characters of the additive group of $R_n$, by Theorem~\ref{thm:diff_set} and~\eqref{eqn:diff_set_char}, the function $f$ is planar if and only if
\begin{equation}
\abs{S_a}^2=\begin{cases}
4^n & \text{for $a=0$}\\
0   & \text{for $a\in 2R_n\setminus\{0\}$}\\
2^n & \text{for $a\in R_n\setminus 2R_n$}.
\end{cases}   \label{eqn:characterisation_of_f}
\end{equation}
\par
Now let $f$ be planar. Using~\eqref{eqn:Lee_weight_cab}, we easily get the Lee weight distribution of the codewords $c_{a,b}$ when $a\in2R_n$ and $b\in\Z_4$. Next assume that $a\in R_n\setminus 2R_n$ and write $S_a=X+\omega Y$ for integers $X$ and $Y$. By Jacobi's two-square theorem, the only solutions to the Diophantine equation $X^2+Y^2=2^n$ are
\[
(X,Y)=\begin{cases}
(\pm2^{(n-1)/2},\pm2^{(n-1)/2})               & \text{for odd $n$}\\
(0,\pm2^{n/2})\;\;\text{or}\;\;(\pm2^{n/2},0) & \text{for even $n$}.
\end{cases}
\]
Therefore, for odd $n$, we have
\[
S_a=\pm2^{(n-1)/2}\pm2^{(n-1)/2}\omega.
\]
Hence, as $b$ ranges over $\Z_4$ and $a\in R_n\setminus2R_n$ is fixed, the expression $\re(\omega^bS_a)$ takes on each of the values $\pm 2^{(n-1)/2}$ twice. One can then get the Lee weight distribution from~\eqref{eqn:Lee_weight_cab}. Likewise, for even $n$, we have
\[
S_a=\pm2^{n/2}\quad\text{or}\quad \pm2^{n/2}\omega.
\]
Hence, as $b$ ranges over $\Z_4$ and $a\in R_n\setminus 2R_n$ is fixed, the expression $\re(\omega^bS_a)$ is zero twice and takes on each of the values $\pm 2^{n/2}$ once. The Lee weight distribution follows from~\eqref{eqn:Lee_weight_cab}.
\par
Now, if $f$ is not planar, then it follows easily from~\eqref{eqn:Lee_weight_cab} and the characterisation~\eqref{eqn:characterisation_of_f} of planar functions that the Lee weight distribution of $(\C_f)^\perp$ cannot coincide with that of $\C^\perp$.
\end{proof}
\par
For odd $n$, we have the following alternative characterisations of planar functions.
\begin{theorem}
\label{thm:Preparata}
For odd $n\ge 3$, the code $\C_f$ has minimum Lee distance $4$ or~$6$, where the value $6$ occurs if and only if $f$ is planar.
\end{theorem}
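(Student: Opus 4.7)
My strategy has three parts: a combinatorial lower bound of $4$ on the minimum Lee distance of $\C_f$; a direct construction of a Lee-weight-$4$ codeword when $f$ is not planar, based on the failure of the relative difference set property; and a weight-enumerator argument pinning down the minimum Lee distance to $6$ when $f$ is planar.

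First I would show unconditionally that $\C_f$ has minimum Lee distance at least $4$. The all-ones parity-check row forces any codeword $c$ to satisfy $\sum_x c_x\equiv 0\pmod 4$, which in particular makes its Lee weight even (so weights $1$, $3$, $5$ are excluded). A Lee-weight-$2$ codeword is either a single coordinate with value $2$ (violating the sum condition) or a pair of values $1$ and $-1$ at two positions $x\ne y$; the second row then demands $x+2\sqrt{f(x)}=y+2\sqrt{f(y)}$, which by uniqueness of the $a+2b$ representation in $R_n$ with $a,b\in\Gamma(R_n)$ forces $x=y$, a contradiction.

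Next, suppose $f$ is not planar. By Theorem~\ref{thm:diff_set} the set $D=\{x+2\sqrt{f(x)}:x\in\Gamma(R_n)\}$ fails to be a $(2^n,2^n,2^n,1)$ relative difference set with forbidden subgroup $2R_n$. The uniqueness of Teichmuller lifts shows that every difference of two distinct elements of $D$ lies in $R_n\setminus 2R_n$, so the forbidden-subgroup clause is automatic, and the failure must be that some $g\in R_n\setminus 2R_n$ admits two representations $d_1-d_2=d_3-d_4=g$ with $(d_1,d_2)\ne(d_3,d_4)$. Reading off the corresponding $\Z_4$-linear relation yields a Lee-weight-$4$ codeword: of shape $(1,1,-1,-1)$ when the four $d_i$ are distinct, and of shape $(2,-1,-1)$ in the only other valid overlap cases $d_1=d_4$ or $d_2=d_3$. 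Hence the minimum Lee distance is exactly $4$.

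Finally, suppose $f$ is planar. Since $f=0$ is planar and the excerpt records that for odd $n\ge 3$ the Gray image $\phi(\C)$ has the parameters of the classical Preparata code (minimum Hamming distance $6$), $\C$ has minimum Lee distance $6$ via the Lee--Hamming isometry $\phi$. For a general planar $f$, Theorem~\ref{thm:C_weight_distribution} gives that $(\C_f)^\perp$ has the same Lee weight enumerator as $\C^\perp$. Specialising the MacWilliams transform of the symmetrised weight enumerator of a $\Z_4$-code yields the invertible identity
\[
L_{C^\perp}(X,Y)=\frac{1}{|C|}\,L_C(X+Y,X-Y)
\]
for the Lee weight enumerator, so $\C_f$ and $\C$ have identical Lee weight enumerators, and therefore $\C_f$ inherits minimum Lee distance $6$. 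I expect the main delicacy to lie in the second step, namely unfolding the overlap cases $d_1=d_4$ and $d_2=d_3$ into the $(2,-1,-1)$ codewords and verifying that the forbidden-subgroup clause is automatic; the other two steps are then essentially formal.
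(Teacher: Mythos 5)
Your proposal is correct and follows essentially the same three-part strategy as the paper: the unconditional lower bound of $4$, the transfer of the minimum Lee distance $6$ from $\C$ to $\C_f$ via Theorem~\ref{thm:C_weight_distribution} together with the Lee-weight MacWilliams identity, and the construction of a Lee-weight-$4$ codeword when $f$ is not planar. The only real difference is in that last step: the paper unwinds the two parity-check equations into equations over the field $(\Gamma(R_n),\oplus,\,\cdot\,)$ and produces a codeword of type $1^2(-1)^2$ directly from a non-planarity witness, whereas you invoke Theorem~\ref{thm:diff_set} and extract the codeword from a repeated difference in $D$ --- which obliges you to treat the overlap cases, and these do work out, since $d_1=d_4$ and $d_2=d_3$ cannot hold simultaneously (that would force $2g=0$, i.e.\ $g\in 2R_n$).
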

\begin{proof}
Recall that the \emph{type} of a codeword is defined as the enumerator of its nonzero entries. For example a codeword of type $1^22^4$ equals $1$ at two positions and equals $2$ at four positions.
\par
Notice that a nonzero codeword in $\C_f$ of Lee weight at most $3$ implies that there exists a codeword in $\C_f$ of type $2^1$, $2^2$, or $2^3$. Such codewords however cannot exist in $\C_f$ (for the same reason as the minimum distance of the extended Hamming code equals $4$). Hence the minimum Lee distance of $\C_f$ is at least $4$. 
\par
If $f$ is planar, the Lee weight distribution of $\C_f$ is independent of $f$ by Theorem~\ref{thm:C_weight_distribution} and a MacWilliams-type identity (see~\cite[\S~II.B]{HamKumCalSloSol1994}, for example). Hence, if $f$ is planar, the minimum Lee distance of $\C_f$ equals that of $\C$, which is $6$~\cite{HamKumCalSloSol1994}.
\par
We complete the proof by assuming that $f$ is not planar and show that $\C_f$ then contains a codeword of type $1^2(-1)^2$, and so has minimum distance at most $4$. The code $\C_f$ contains a codeword of type $1^2(-1)^2$ if and only if there exist distinct elements $u,v,x,y$ in $\Gamma(R_n)$ satisfying simultaneously the following two equations over $R_n$
\begin{gather*}
u+x=v+y\\
u+2\sqrt{f(u)}+x+2\sqrt{f(x)}=v+2\sqrt{f(v)}+y+2\sqrt{f(y)}.
\end{gather*}
By the definition~\eqref{eqn:def_oplus} of addition in $\Gamma(R_n)$, these equations are equivalent to the following two equations over $\Gamma(R_n)$
\begin{gather*}
u\oplus x=v\oplus y\\
ux\oplus f(u)\oplus f(x)=vy\oplus f(v)\oplus f(y).
\end{gather*}
From the first equation we infer that there exists $z\in\Gamma(R_n)$ such that $u=v\oplus z$ and $y=x\oplus z$. The second equation then becomes
\[
f(v)\oplus f(v\oplus z)\oplus vz=f(x)\oplus f(x\oplus z)\oplus xz.
\]
Since $f$ is not planar, this equation has a solution $(v,x,z)$, where $v$ and $x$ are distinct and $z\ne 0$. One then verifies that $u,v,x,y$ are also distinct.
\end{proof}
\par
A consequence of Theorem~\ref{thm:Preparata} is the following.
\begin{corollary}
\label{cor:Preparata}
For odd $n\ge 3$, the code $\phi(\C_f)$ punctured in one (arbitrary) coordinate has minimum distance $3$, $4$, or~$5$, where the value $5$ occurs if and only if $f$ is~planar.
\end{corollary}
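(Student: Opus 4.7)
The plan is to combine Theorem~\ref{thm:Preparata} with the isometry property of the Gray map. Since $\phi$ is an isometry from $(\Z_4)^{2^n}$ equipped with the Lee distance onto $(\F_2)^{2^{n+1}}$ equipped with the Hamming distance, $\phi(\C_f)$ has minimum Hamming distance equal to the minimum Lee distance of $\C_f$, which by Theorem~\ref{thm:Preparata} is $4$ if $f$ is not planar and $6$ if $f$ is planar. Since puncturing any binary code in a single coordinate decreases its minimum distance by at most $1$, the punctured minimum distance of $\phi(\C_f)$ lies in $\{3,4\}$ when $f$ is not planar and in $\{5,6\}$ when $f$ is planar. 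In the non-planar case the punctured distance is at most $4$, so it lies in $\{3,4,5\}$ and cannot equal $5$; the whole claim therefore reduces to showing that, when $f$ is planar, the punctured distance is $5$ rather than $6$.

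Equivalently, one must show that for every binary coordinate $k$ of $\phi(\C_f)$ there is a Hamming weight $6$ codeword of $\phi(\C_f)$ with a $1$ in position $k$. I would reduce this to a claim about $\C_f$ itself. Each $\Z_4$-position $i$ contributes two binary positions $(i,0),(i,1)$ under $\phi$, with $\phi(0)=00$, $\phi(1)=01$, $\phi(2)=11$, $\phi(3)=10$. Since $\C_f$ is closed under negation $c\mapsto -c$, which swaps $1\leftrightarrow 3$ while fixing $0$ and $2$, a single Lee weight $6$ codeword $c$ with $c_i\neq 0$, together with $-c$, covers both binary positions associated with $i$: if $c_i=2$ then $\phi(c)$ alone puts a $1$ at both $(i,0)$ and $(i,1)$, and if $c_i\in\{1,3\}$ then one of $\phi(c),\phi(-c)$ puts a $1$ at $(i,0)$ and the other at $(i,1)$. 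Hence it suffices to show that for every $i\in\Gamma(R_n)$ the code $\C_f$ contains a Lee weight $6$ codeword with $c_i\neq 0$.

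This last statement is the main content of the proof. A direct approach is to solve the two parity check equations defining $\C_f$, namely $\sum_x c_x=0$ in $\Z_4$ and $\sum_x c_x(x+2\sqrt{f(x)})=0$ in $R_n$, for a $\Z_4$-vector of Lee weight $6$ whose $i$-th entry is prescribed to be nonzero. By the case analysis at the end of the proof of Theorem~\ref{thm:Preparata}, planarity of $f$ excludes codewords of type $1^2(-1)^2$, so one works with the remaining Lee weight $6$ types $2^3$, $2^2(\pm 1)^2$, $(\pm 1)^4 2^1$, and $(\pm 1)^6$. The hardest part will be to establish such a codeword uniformly in $f$ and $i$: the Lee weight distribution of $\C_f$ is by Theorem~\ref{thm:C_weight_distribution} and the MacWilliams identity independent of the planar function $f$, but identity of weight distributions does not formally imply coordinate-wise coverage, so a structural argument is needed. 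A useful symmetry is that if $f$ is planar then so is $\tilde f(x)=\alpha^{-2}f(\alpha x)$ for any $\alpha\in\Gamma(R_n)^*$, and a direct computation with the parity check matrix shows that $\C_{\tilde f}$ equals $\C_f$ after the coordinate permutation $x\mapsto\alpha^{-1}x$; this reduces the required coverage to a single representative nonzero coordinate (together with the coordinate $i=0$, which is handled by an explicit codeword construction).
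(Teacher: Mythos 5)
Your reduction is sound and matches the paper's as far as it goes: the Gray isometry transfers Theorem~\ref{thm:Preparata} to $\phi(\C_f)$, puncturing drops the distance by at most one, and the only nontrivial point is to exclude minimum distance $6$ for the punctured code when $f$ is planar. But that is precisely the step your proposal does not actually carry out. You correctly reformulate it (via the negation symmetry $c\mapsto -c$ and the translation $x\mapsto\alpha^{-1}x$ on coordinates) as the existence, for each coordinate, of a Lee weight $6$ codeword of $\C_f$ that is nonzero there, and you even flag it yourself as ``the hardest part'' and ``the main content of the proof'' --- yet what follows is only a list of tools (the weight distribution of $\C_f$, the admissible types $2^3$, $2^2(\pm1)^2$, $(\pm1)^42^1$, $(\pm1)^6$, and an unspecified ``explicit codeword construction'' for the coordinate $0$ and for one representative nonzero coordinate). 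As you note, knowing the weight distribution does not give coordinate-wise coverage by minimum-weight codewords, so the existence claim is genuinely open in your write-up. The proof is therefore incomplete at its crucial point.

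The paper closes this gap by a completely different and much shorter argument: if the punctured code had minimum distance $6$, it would be a binary code of length $2^n-1$ with $2^{2^n-2n}$ codewords and distance $6$, which violates the Johnson bound as formulated by Goethals and Snover~\cite{GoeSno1972} (indeed, with distance $5$ these parameters already \emph{meet} that bound, making the code nearly perfect). This is a pure counting argument and avoids any structural analysis of minimum-weight codewords. If you want to salvage your structural route, you would need to actually exhibit the required weight-$6$ codewords --- for instance a codeword of type $2^3$ through a prescribed coordinate, i.e.\ distinct $x,y,z\in\Gamma(R_n)$ with $2(x+y+z)=0$ and $2(x+y+z)+4(\sqrt{f(x)}+\sqrt{f(y)}+\sqrt{f(z)})=0$ in $R_n$, noting the second condition is automatic --- but none of this is in your proposal, whereas the Johnson-bound argument disposes of the issue in one line.
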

\begin{proof}
The only part that is not immediate from Theorem~\ref{thm:Preparata} is that the code cannot have minimum distance $6$. But this value cannot occur since the code then violates a version of the Johnson bound~\cite{GoeSno1972}.
\end{proof}
\par
Let $\D_f$ be the code over $\F_2$ with parity check matrix~\eqref{eqn:pc_matrix_BCH}. If $f$ is almost perfect nonlinear, then $\D_f$ has parameters $(2^n-1,2^{2^n-2n-1},5)$ for $n\ge 4$. In contrast, by Corollary~\ref{cor:Preparata}, if $f$ is planar, then $\phi(\C_f)$ punctured in one coordinate has parameters $(2^n-1,2^{2^n-2n},5)$ for even $n\ge 4$, and so contains twice as many codewords as $\D_f$. If $f$ is planar, then $\phi(\C_f)$ punctured in one coordinate meets a version of the Johnson bound, and so is nearly perfect~\cite{GoeSno1972}.


\section{Planar monomial functions}
\label{sec:monomials}

We begin with providing a nontrivial example of planar monomial functions, in which
\[
\Tr_m(x)=x+x^2+\cdots+x^{2^{m-1}}
\]
denotes the trace function on $\F_{2^m}$.
\begin{theorem}
\label{thm:planar_monomial_1}
Let $c\in\F_{2^k}^*$ be such that $\Tr_k(c)=0$. Then the function
\[
x\mapsto cx^{2^k+1}
\]
is planar on $\F_{4^k}$.
\end{theorem}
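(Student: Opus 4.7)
The plan is to verify planarity by a direct computation: setting $q=2^{n/2}$, expand $f(x+\epsilon)+f(x)+\epsilon x=c(x+\epsilon)^{q+1}+cx^{q+1}+\epsilon x$ using $(x+\epsilon)^{q+1}=(x+\epsilon)(x^q+\epsilon^q)$, so that the map equals $L_\epsilon(x)+c\epsilon^{q+1}$, where
\[
L_\epsilon(x)=c\epsilon\,x^q+(c\epsilon^q+\epsilon)x.
\]
Since the constant is irrelevant, the task reduces to proving that the $\F_2$-linearized polynomial $L_\epsilon$ is a permutation of $\F_{q^2}$ for every $\epsilon\in\F_{q^2}^*$.

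Next, I would analyze the kernel of $L_\epsilon$. A nonzero $x$ lies in $\ker L_\epsilon$ exactly when $x^{q-1}=\epsilon^{q-1}+1/c$. Since $x\mapsto x^{q-1}$ maps $\F_{q^2}^*$ onto the norm-one subgroup $\mu_{q+1}=\{z\in\F_{q^2}^*:z^{q+1}=1\}$, the map $L_\epsilon$ fails to be a permutation precisely when $y:=\epsilon^{q-1}+1/c$ lies in $\mu_{q+1}$. Now set $u=\epsilon^{q-1}\in\mu_{q+1}$; using $c\in\F_q$ (so $c^q=c$) and $\epsilon^{q^2}=\epsilon$, a short computation gives
\[
y^{q+1}=1+(u+u^{-1})/c+1/c^2,
\]
so $y\in\mu_{q+1}$ (including the $y=0$ case, which never occurs here because it would force $c=0$) if and only if $u+u^{-1}=1/c$. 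As $\epsilon$ runs through $\F_{q^2}^*$, $u=\epsilon^{q-1}$ runs through all of $\mu_{q+1}$. Hence the planarity of $f$ is equivalent to
\[
1/c\notin\{u+u^{-1}:u\in\mu_{q+1}\}.
\]

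Finally, I would characterize this image inside $\F_q$. Multiplying $u+u^{-1}=t$ by $u$ shows $t\in\F_q^*$ is attained iff the quadratic $u^2+tu+1$ has a root in $\F_{q^2}\setminus\F_q$: a root in $\F_q$ would yield $u\in\F_q^*$, forcing $u^{q+1}=u^2\ne 1$ for $t\ne 0$, hence not in $\mu_{q+1}$; whereas a root not in $\F_q$ is swapped with its inverse by the Frobenius $x\mapsto x^q$, so $u^q=u^{-1}$, i.e.\ $u\in\mu_{q+1}$. The standard $\F_2$-trace criterion says $u^2+tu+1$ splits over $\F_{2^m}$ iff $\Tr_{2^m/2}(1/t^2)=0$. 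For $t\in\F_q^*$ we have $1/t^2\in\F_q$, and using $\Tr_{q^2/2}=\Tr_{q/2}\circ\Tr_{q^2/q}$ together with $\Tr_{q^2/q}(x)=x+x^q=0$ for $x\in\F_q$, the quadratic always splits over $\F_{q^2}$; meanwhile it splits over $\F_q$ iff $\Tr_{n/2}(1/t^2)=\Tr_{n/2}(1/t)=0$. Therefore $t$ lies in the image iff $\Tr_{n/2}(1/t)=1$. Substituting $t=1/c$ gives: $1/c$ lies in the image iff $\Tr_{n/2}(c)=1$, so the complementary hypothesis $\Tr_{n/2}(c)=0$ is exactly the planarity condition.

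The main obstacle is the last step, where one must correctly handle the difference between ``splits over $\F_{q^2}$'' (which is automatic because $1/t^2\in\F_q$ and $\Tr_{q^2/q}$ annihilates $\F_q$ in characteristic~$2$) and ``splits over $\F_q$'' (governed by $\Tr_{n/2}(1/t)$); the roots lying in $\mu_{q+1}$ corresponds to the ``irreducible over $\F_q$ but split over $\F_{q^2}$'' case, and it is this dichotomy that yields the trace condition on $c$.
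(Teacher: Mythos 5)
Your proposal is correct and follows essentially the same route as the paper: reduce planarity to the triviality of the kernel of the linearized map, translate that into $\mu_{q+1}\cap(\mu_{q+1}+1/c)=\varnothing$ where $\mu_{q+1}$ is the norm-one subgroup, and settle the resulting quadratic $u^2+u/c+1=0$ (the paper's $z^2+z+c^2=0$ after rescaling) via the Artin--Schreier trace criterion. The only blemish is the parenthetical claim that $y=0$ would force $c=0$ (it would in fact force $u=1/c\in\mu_{q+1}\cap\F_q^*=\{1\}$, hence $c=1$), but this is harmless since $y^{q+1}=1$ already excludes $y=0$.
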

\begin{proof}
We have to show that, for each $\e\in\F_{4^k}^*$, the mapping
\[
x\mapsto c(x+\e)^{2^k+1}+cx^{2^k+1}+\e x
\]
is a permutation of $\F_{4^k}$, or equivalently, the linear mapping
\begin{equation}
x\mapsto x^{2^k}\e+x\e^{2^k}+\e x/c   \label{eqn:linear_mapping}
\end{equation}
is a permutation of $\F_{4^k}$. This holds if the kernel of the mapping~\eqref{eqn:linear_mapping} is trivial. Hence, it is enough to show that
\[
x^{2^k-1}=\e^{2^k-1}+1/c
\]
has no solution $(x,\e)$ in $\F_{4^k}^*\times \F_{4^k}^*$. Let $\Gamma$ be the cyclic subgroup of $\F_{4^k}^*$ with order $2^k+1$. We show that
\[
\Gamma\cap (\Gamma+1/c)=\varnothing,
\]
which will prove the theorem.
\par
Let $y$ be in $\Gamma$. Then $y^{2^k+1}=1$ and, since $c\in\F_{2^k}^*$,
\begin{equation}
(y+1/c)^{2^k+1}=1+1/c^2+1/(cy)+y/c.   \label{eqn:y1c}
\end{equation}
Now suppose, for a contradiction, that $y$ also belongs to $\Gamma+1/c$. Then the left hand side of~\eqref{eqn:y1c} equals $1$, and thus
\begin{equation}
y^2+y/c+1=0.   \label{eqn:quadratic}
\end{equation}
We may set $z=yc$ to transform this quadratic equation into the standard form $z^2+z+c^2=0$, which has two solutions in $\F_{2^k}$ if and only if $\Tr_k(c^2)=0$~\cite[Ch.~9, Thm.~15]{MacSlo1977}. Since $\Tr_k(c)=0$ and $c\in\F_{2^k}$, we find that $y\in\F_{2^k}$. But $y$ is also in $\Gamma$, so that
\[
1=y^{2^k+1}=y^2,
\]
contradicting~\eqref{eqn:quadratic}.
\end{proof}
\par
We conjecture that the only planar exponents of $\F_{2^n}$ are the trivial examples $2^k$ and those identified in Theorem~\ref{thm:planar_monomial_1} and \cite[Theorem~1.1]{SchZie2013}.
\begin{conjecture}
\label{con:all_planar_monomials}
If $t$ is a planar exponent of $\F_{2^n}$, then $t$ is one of the values given in Table~\ref{tab:planar_exponents}.
\end{conjecture}
\par
The following partial answer to Conjecture~\ref{con:all_planar_monomials} is easy to prove.
\begin{proposition}
\label{thm:tminus2_coprime}
Let $t$ be an integer satisfying $\gcd(t-2,2^n-1)=1$. If $t$ is a planar exponent of $\F_{2^n}$, then $t$ is a power of $2$.
\end{proposition}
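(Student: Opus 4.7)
The plan is to translate the planar condition for $f(x)=cx^t$ into a polynomial identity in $\F_2[x]$ and then extract the conclusion via Lucas's theorem.

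First, I unfold the definition: planarity requires that $F_\e(x):=c(x+\e)^t+cx^t+\e x$ be a permutation of $\F_{2^n}$ for every $\e\in\F_{2^n}^*$, equivalently that the collision equation $F_\e(x)+F_\e(y)=0$ have no solution with $x\ne y$. Substituting $x=\e u$, $y=\e v$ and dividing by $\e^2$ rewrites this as: for every $\e\in\F_{2^n}^*$, the equation
\[
c\e^{t-2}\big[g(u)+g(v)\big]=u+v, \qquad g(x):=(x+1)^t+x^t,
\]
has no solution with $u\ne v$.

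Second, I invoke the hypothesis $\gcd(t-2,2^n-1)=1$, which makes $\e\mapsto c\e^{t-2}$ a bijection of $\F_{2^n}^*$. Hence the coefficient $d:=c\e^{t-2}$ ranges over all of $\F_{2^n}^*$. For any $u\ne v$ in $\F_{2^n}$ one has $u+v\ne 0$ (characteristic two), so if $g(u)+g(v)\ne 0$ then setting $d=(u+v)/(g(u)+g(v))\in\F_{2^n}^*$ produces a forbidden collision. Thus planarity forces $g(u)=g(v)$ for all $u\ne v$, i.e., $g$ is constant on $\F_{2^n}$; since $g(0)=1$, the constant equals $1$, giving
\[
(x+1)^t+x^t+1=0 \quad \text{for every } x\in\F_{2^n}.
\]

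Third, I finish by a polynomial degree argument. The polynomial $(x+1)^t+x^t+1\in\F_2[x]$ has degree at most $t-1\le 2^n-3<2^n$ (the $x^t$ terms cancel), so vanishing on all of $\F_{2^n}$ forces it to vanish in $\F_2[x]$. Equivalently $(x+1)^t=x^t+1$ identically, i.e., $\binom{t}{k}\equiv 0\pmod 2$ for every $1\le k\le t-1$, which by Lucas's theorem occurs exactly when $t$ has a single nonzero binary digit, i.e., $t$ is a power of two. The argument is short and essentially linear; the only point that requires a real idea is the second step, where the coprimality hypothesis is used to let $d$ sweep through $\F_{2^n}^*$ and thereby collapse planarity to the rigid requirement that $g$ be constant.
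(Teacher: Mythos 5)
Your proof is correct and follows essentially the same route as the paper's: substitute to normalise $\e$ away, use $\gcd(t-2,2^n-1)=1$ so that the coefficient $c\e^{t-2}$ sweeps all of $\F_{2^n}^*$, conclude that $(x+1)^t+x^t$ must be constant, and finish with Lucas's theorem. The only difference is cosmetic — you spell out the final degree/Lucas step that the paper leaves implicit.
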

\begin{proof}
Suppose that $x\mapsto cx^t$ is planar on $\F_{2^n}$ for some $c\in\F_{2^n}^*$. Then
\[
x\mapsto c(x+\e)^t+cx^t+\e x
\]
is a permutation of $\F_{2^n}$ for each $\e\in\F_{2^n}^*$. Substituting $y=x/\e$, we see that
\[
y\mapsto (y+1)^t+y^t+(\e^{2-t}/c)y
\]
is a permutation of $\F_{2^n}$ for each $\e\in\F_{2^n}^*$. Hence, for each $\e\in\F_{2^n}^*$, the equation
\[
(y+1)^t+y^t+(z+1)^t+z^t=(\e^{2-t}/c)(y+z)
\]
has no solution $(x,y)$ in $\F_{2^n}\times\F_{2^n}$ satisfying $y\ne z$. Equivalently, writing
\[
D=\bigg\{\frac{(y+1)^t+y^t+(z+1)^t+z^t}{y+z}:y,z\in\F_{2^n},y\ne z\bigg\},
\]
we have $D\cap \{\e^{2-t}/c:\e\in\F_{2^n}^*\}=\varnothing$. But since $t-2$ is coprime to $2^n-1$, we have $\{\e^{2-t}/c:\e\in\F_{2^n}^*\}=\F_{2^n}^*$, hence $D=\{0\}$. Therefore, $(y+1)^t+y^t$ is constant for all $y\in\F_{2^n}$, which implies that $t$ is a power of two.
\end{proof} 
\begin{remark}
Proposition~\ref{thm:tminus2_coprime} corresponds to case (iv) of~\cite[Theorem~1.1]{BloBalBroStoSzo1999}.
\end{remark}
\par
We now focus on the relaxed problem of classifying the numbers that are planar exponents of $\F_{2^n}$ for infinitely many~$n$. The only known such numbers are the powers of $2$ and we have the following weaker form of Conjecture~\ref{con:all_planar_monomials}.
\begin{conjecture}
\label{con:all_exceptional_planar_monomials}
If $t$ is a planar exponent of $\F_{2^n}$ for infinitely many $n$, then~$t$ is a power of~$2$.
\end{conjecture}
\par
Our main result, Theorem~\ref{thm:exceptional_monomials_odd}, is a partial answer to this conjecture. This result will be proved in the remainder of this paper. The method is outlined below.
\par
Let $f:\F_{2^n}\to\F_{2^n}$ be of the form $f(x)=cx^t$ for some $c\in\F_{2^n}^*$ and let $\e\in\F_{2^n}^*$. Then the condition that~\eqref{eqn:planar_def} is a permutation is equivalent to the condition that the polynomial
\[
c(U+\e)^t+c(V+\e)^t+cU^t+cV^t+\e(U+V)
\]
has no zeros $(u,v)$ over $\F_{2^n}$ satisfying $u\ne v$. Substituting $U=\e X$ and $V=\e Y$, we see that this condition is in turn equivalent to the condition that the polynomial
\begin{equation}
(X+1)^t+(Y+1)^t+X^t+Y^t+a(X+Y)   \label{eqn:poly}
\end{equation}
has no zeros $(u,v)$ over $\F_{2^n}$ satisfying $u\ne v$, where $a=\e^{2-t}/c$. The polynomial~\eqref{eqn:poly} is divisible by $X+Y$. We are therefore interested in the zeros of the polynomial  
\begin{equation}
F_{t,a}(X,Y)=\frac{(X+1)^t+(Y+1)^t+X^t+Y^t+a(X+Y)}{X+Y}   \label{eqn:def_F}
\end{equation}
(which however could still have zeros on the line $X+Y$). We consider the affine plane curve defined by $F_{t,a}$ (and follow the usual convention to denote the curve and a defining polynomial by the same symbol). Then, defining a subset of $\F_{2^n}$ by
\begin{equation}
\A_n=\{\e^{2-t}/c:\e\in\F_{2^n}^*\},   \label{eqn:def_set_A}
\end{equation}
the function $x\mapsto cx^t$ is planar on $\F_{2^n}$ if and only if the curve $F_{t,a}$ has no rational points $(u,v)$ over $\F_{2^n}$ satisfying $u\ne v$ for some $a\in\A_n$. 
\par 
The number of rational points on a curve can be estimated using Weil's Theorem, which we quote in the following form (see~\cite[Theorem~5.4.1]{FriJar2008}, for example).
\begin{weil}
Let $F\in\F_q[X,Y]$ be an absolutely irreducible polynomial of degree $d$ and let $N$ be the number of rational points over $\F_q$ on the affine plane curve $F$. Then
\[
\abs{N-q-1}\le (d-1)(d-2)\sqrt{q}+d.
\]
\end{weil}
\par
A consequence of Weil's Theorem is the following.
\begin{proposition}
\label{pro:irreducibility_implies_rational_points}
If $F_{t,a}$ has an absolutely irreducible factor over $\F_{2^n}$ for some $a\ne 1$ in $\A_n$ and $n$ is sufficiently large, then $t$ is not a planar exponent of $\F_{2^n}$.
\end{proposition}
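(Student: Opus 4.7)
The plan is to combine Weil's theorem with Bezout's theorem to force an $\F_{2^n}$-rational zero of $F_{t,a}$ off the diagonal line $X=Y$, which by the setup preceding the statement rules out planarity of $x\mapsto cx^t$.

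Let $G\in\F_{2^n}[X,Y]$ be an absolutely irreducible factor of $F_{t,a}$ produced by the hypothesis, and set $d=\deg G$. Since $\deg F_{t,a}\le t-2$ is bounded by a constant depending only on $t$, so is $d$. Applying Weil's theorem to the affine plane curve defined by $G$, the number of $\F_{2^n}$-rational points on $G$ is at least
\[
2^n+1-(d-1)(d-2)\,2^{n/2}-d.
\]

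The next step is to rule out $G=X+Y$, which would produce only points on the diagonal and leave the Weil bound useless. The polynomial $X+Y$ divides $F_{t,a}$ precisely when $F_{t,a}(X,X)$ vanishes identically as a polynomial in $X$, and a direct computation in characteristic two gives
\[
F_{t,a}(X,X)=(X+1)^{t-1}+X^{t-1}+a.
\]
If $t-1$ is a power of two, Frobenius collapses this to the constant $1+a$, which vanishes only when $a=1$; otherwise, $(X+1)^{t-1}+X^{t-1}$ is a polynomial of positive degree, so no choice of the constant $a$ can make it identically zero. In either case, the hypothesis $a\ne 1$ excludes $X+Y\mid F_{t,a}$, and in particular $G\ne X+Y$. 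Hence $G$ and $X+Y$ are coprime, and Bezout's theorem bounds the number of their common zeros by $d$.

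Subtracting, the number of $\F_{2^n}$-rational points of $G$ lying off the diagonal is at least $2^n+1-(d-1)(d-2)\,2^{n/2}-2d$. Since $d$ is bounded in terms of the fixed exponent $t$, this quantity grows like $2^n$ and is strictly positive for $n$ sufficiently large. Each such point is a zero of $F_{t,a}$ with $u\ne v$, and by the equivalence preceding the statement this forces $x\mapsto cx^t$ to fail planarity. The only delicate step is the dichotomy for the factor $X+Y$, and this is precisely where the hypothesis $a\ne 1$ is used; the remainder is a direct application of Weil and Bezout together with the boundedness of $d$ in terms of $t$.
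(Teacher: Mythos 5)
Your proof is correct and follows essentially the same route as the paper: apply Weil's theorem to the absolutely irreducible factor and then show that the diagonal $X=Y$ contributes only boundedly many points because $a\ne 1$ forces $X+Y\nmid F_{t,a}$ (the paper phrases this via partial derivatives of the numerator, which amounts to your computation of $F_{t,a}(X,X)$). The only caveat is that your explicit formula $F_{t,a}(X,X)=(X+1)^{t-1}+X^{t-1}+a$ implicitly uses that $t$ is odd (for even $t$ one instead gets $F_{t,a}(X,X)=a\ne 0$), but this does not affect the conclusion and covers all the cases in which the proposition is actually applied.
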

\begin{proof}
Let $a\in\A_n$ satisfy $a\ne 1$ and suppose that $F_{t,a}$ has an absolutely irreducible factor over $\F_{2^n}$. By the above discussion, it is sufficient to show that, if $n$ is sufficiently large, then the curve $F_{t,a}$ has rational points $(u,v)$ over $\F_{2^n}$ satisfying $u\ne v$. Since the degree of $F_{t,a}$ is at most $t-2$, by Weil's Theorem the number of rational points over $\F_{2^n}$ on the curve $F_{t,a}$ is at least
\[
2^n-(t-3)(t-4)2^{n/2}-t+3.
\]
By taking partial derivatives of the numerator of $F_{t,a}$, we see that $F_{t,a}$ is never divisible by $X+Y$ since $a\ne 1$ (this fails for $t=2^s+1$ with $s>0$ if we allow $a=1$). Hence, if $(u,u)$ is on the curve $F_{t,a}$, then $u$ is a root of a nonzero polynomial of bounded degree. Therefore, if $n$ is sufficiently large, the curve $F_{t,a}$ has rational points $(u,v)$ over $\F_{2^n}$ satisfying $u\ne v$.
\end{proof}
\par
In view of Proposition~\ref{pro:irreducibility_implies_rational_points}, Conjecture~\ref{con:all_exceptional_planar_monomials} is proved by showing that, when $t$ is not a power of $2$, $F_{t,a}$ has an absolutely irreducible factor over $\F_{2^n}$ for some $a\in\A_n$ satisfying $a\ne 1$ and all sufficiently large~$n$.
\par
The following corollary to Lucas's Theorem will be useful.
\begin{lemma}
\label{lem:Lucas}
The binomial coefficient $\tbinom{m}{k}$ is even if and only if at least one of the base-$2$ digits of $k$ is greater than the corresponding digit of $m$.
\end{lemma}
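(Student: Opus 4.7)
The plan is to derive this directly from Lucas's theorem, which states that for a prime $p$ and nonnegative integers $m,k$ with base-$p$ expansions $m=\sum_i m_i p^i$ and $k=\sum_i k_i p^i$, one has
\[
\binom{m}{k}\equiv\prod_i\binom{m_i}{k_i}\pmod p.
\]
I would cite this as a known fact (it appears in any standard reference on combinatorial number theory) and then specialise to $p=2$.

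With $p=2$, each digit $m_i$ and $k_i$ lies in $\{0,1\}$. The binomial coefficient $\binom{m_i}{k_i}$ equals $1$ except in the single case $m_i=0$, $k_i=1$, where it equals $0$. Thus the product $\prod_i\binom{m_i}{k_i}$ is $0$ modulo $2$ precisely when there exists some index $i$ with $k_i=1>0=m_i$, that is, precisely when some base-$2$ digit of $k$ exceeds the corresponding digit of $m$; otherwise the product equals $1$. Combining this with Lucas's congruence yields the stated equivalence.

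The argument is entirely routine once Lucas's theorem is invoked, so there is no real obstacle; the only thing to be careful about is stating the digit comparison in the right direction (it is $k_i>m_i$, not the reverse, that forces a zero factor). No further machinery or case analysis is needed.
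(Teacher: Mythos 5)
Your proof is correct and matches the paper's intent exactly: the paper states this lemma as a corollary of Lucas's Theorem and gives no proof, and your specialisation of Lucas's congruence to $p=2$ is precisely the routine argument being implicitly invoked. Nothing further is needed.
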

\par
Instead of looking at $F_{t,a}$ directly, we consider its homogenised version $H_{t,a}(X,Y,Z)$. If $t$ is not a power of two, we find from Lemma~\ref{lem:Lucas} that
\begin{equation}
H_{t,a}(X,Y,Z)=\frac{(X+Z)^t+(Y+Z)^t+X^t+Y^t+a(X+Y)Z^{t-1}}{Z^{2^j}(X+Y)},   \label{eqn:def_H}
\end{equation}
where $j$ is the largest power of $2$ that divides $t$. Of course, $F_{t,a}$ has an absolutely irreducible factor if and only if $H_{t,a}$ has an absolutely irreducible factor. Our strategy is to consider the projective plane curve defined by $H_{t,a}$ over the algebraic closure $\F$ of $\F_2$ and derive a contradiction to Bezout's Theorem (see~\cite[\S~5.3]{Ful2008}, for example) under the assumption that $H_{t,a}$ has no absolutely irreducible factor over $\F_{2^n}$.
\begin{bezout}
Let $A$ and $B$ be two projective plane curves over an algebraically closed field $\K$, having no component in common. Then
\[
\sum_PI_P(A,B)=(\deg A)(\deg B),
\]
where the sum runs over all points in the projective plane $\P^2(\K)$.
\end{bezout}
\par
Notice that $I_P(A,B)$ is the \emph{intersection number} of $A$ and $B$ at $P$, whose precise definition is neither recalled nor required in this paper. We shall rather use some properties of the intersection number, which allows us to compute it in certain cases of interest. In Section~\ref{sec:intersection_numbers}, we shall obtain general upper bounds on the intersection number $I_P(A,B)$, where $F_{t,a}=AB$ is an arbitrary factorisation of $F_{t,a}$ and $P$ is a point in the plane $\P^2(\F)$. The desired contradiction to Bezout's Theorem is then derived in Section~\ref{sec:main_proof}.


\section{Computation of intersection numbers}
\label{sec:intersection_numbers}

\subsection{Some results on intersection numbers}

Let $F$ be an affine plane curve (which we always assume to be defined over an algebraically closed field), let $P=(u,v)$ be a point in the plane, and write
\[
F(X+u,Y+v)=F_0(X,Y)+F_1(X,Y)+F_2(X,Y)+\cdots,
\]
where $F_i$ is either zero or homogeneous of degree $i$. The \emph{multiplicity} of $F$ at $P$, written as $m_P(F)$, is the smallest integer $m$ such that $F_m\ne 0$ and $F_i=0$ for $i<m$; the polynomial is $F_m$ is the \emph{tangent cone} of $F$ at $P$. A divisor of the tangent cone is called a \emph{tangent} of $F$ at $P$. The point $P$ is on the curve $F$ if and only if $m_P(F)\ge 1$. If $P$ is on $F$, then $P$ is a \emph{simple} point of $F$ if $m_P(F)=1$, otherwise $P$ is a \emph{singular} point of $F$.
\par
Now let $F^*(X,Y,Z)$ be the homogenised polynomial of $F(X,Y)$ and write $P^*=(u,v,1)$ (in homogeneous coordinates). Then the multiplicity of the projective plane curve $F^*$ at $P^*$, also written as $m_{P^*}(F^*)$, is by definition $m_P(F)$. Likewise the intersection number $I_{P^*}(A^*,B^*)$ is by definition $I_P(A,B)$, where $A^*$ and $B^*$ are the homogenised polynomials of $A$ and $B$, respectively (see~\cite[Ch.~5]{Ful2008} for details). We may therefore restrict our analysis to affine plane curves.
\par
One important property of the intersection number is that $I_P(A,B)=0$ if $P$ is not a singular point of $AB$. This is a special case of the following more general property.
\begin{lemma}[{\cite[Ch.~3, Property (5)]{Ful2008}}]
\label{lem:intersection_number_and_multiplicity}
Let $A$ and $B$ be two affine plane curves and suppose that the tangent cones of $A$ and $B$ do not share a common factor. Let $P$ be a point in the plane. Then $I_P(A,B)=m_P(A)m_P(B)$.
\end{lemma}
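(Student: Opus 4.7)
The plan is to work with the local-ring definition $I_P(A,B)=\dim_k \O_P/(A,B)$, where $\O_P$ is the local ring of $\mathbb{A}^2$ at $P$ over the algebraically closed base field $k$. After a translation we may assume $P$ is the origin, and write
\[
A=A_r+A_{r+1}+\cdots,\qquad B=B_s+B_{s+1}+\cdots,
\]
where $r=m_P(A)$, $s=m_P(B)$, and the tangent cones $A_r,B_s\in k[X,Y]$ are homogeneous of those degrees and coprime by hypothesis. Let $\mathfrak{m}=(X,Y)$ be the maximal ideal of $\O_P$. The goal is to show $\dim_k \O_P/(A,B)=rs$.

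For the lower bound $I_P(A,B)\ge rs$ I would argue that the natural surjection $\O_P/(A,B)\twoheadrightarrow \O_P/(\mathfrak{m}^{r+s-1}+(A,B))$ has target of dimension at least $rs$. Concretely, on the associated graded $\bigoplus_k \mathfrak{m}^k/\mathfrak{m}^{k+1}\cong k[X,Y]$ the elements $A,B$ contribute only in degrees $\ge r,s$ respectively, so one gets at least as many monomial classes in total degree $<r+s-1$ as the Hilbert-function count for $k[X,Y]/(A_r,B_s)$; this count is exactly $rs$ since $(A_r,B_s)$ is generated by coprime forms of degrees $r,s$.

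For the reverse bound the key algebraic input is that coprimality of the tangent cones makes $A_r,B_s$ a regular sequence in the graded ring $k[X,Y]$, so $k[X,Y]/(A_r,B_s)$ is a finite-dimensional $k$-algebra of dimension $rs$ concentrated in degrees $\le r+s-2$. I would then lift this to $\O_P$: given any $F\in\O_P$, reduce its image in $\O_P/(A,B)$ modulo $\mathfrak{m}^{N}$ for large $N$, then use the Koszul/lifting argument to replace higher-degree contributions by elements of $\mathfrak{m}^{N+1}$ modulo $(A,B)$. Passing to the $\mathfrak{m}$-adic completion (or using Krull's intersection theorem together with $\O_P/(A,B)$ being a finite-length module, which must be checked separately since $A$ and $B$ share no component through $P$) finishes the argument.

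The main obstacle is precisely this lifting step: showing that the higher-order terms $A-A_r$ and $B-B_s$ do not enlarge $\O_P/(A,B)$ beyond what $(A_r,B_s)$ already cuts out on the graded ring. Equivalently, one must verify that the filtration $\mathfrak{m}^k(\O_P/(A,B))$ terminates after finitely many steps and that the subquotients agree with those of $k[X,Y]/(A_r,B_s)$. Standard proofs handle this either by a Nakayama-style inductive descent or by comparing Hilbert series in the completion; once this comparison is in place, combining with the lower bound of the previous paragraph yields $I_P(A,B)=rs=m_P(A)m_P(B)$ as required.
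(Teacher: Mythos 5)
The paper offers no proof of this lemma: it is quoted directly from Fulton (Ch.~3, Property~(5) of the intersection number), so there is no in-house argument to compare against. Your plan --- the local-ring formula $I_P(A,B)=\dim_k\O_P/(A,B)$, truncation by a power of the maximal ideal $\mathfrak m$, and comparison with the graded complete intersection $k[X,Y]/(A_r,B_s)$ --- is essentially Fulton's own proof, so the architecture is right. The first genuine problem is your lower bound. The associated graded ring of $\O_P/(A,B)$ is $k[X,Y]$ modulo the ideal generated by the initial forms of \emph{all} elements of $(A,B)$, which contains $(A_r,B_s)$ and could a priori be strictly larger; the observation that ``$A,B$ contribute only in degrees $\ge r,s$'' therefore yields an \emph{upper} bound by the Hilbert function of $k[X,Y]/(A_r,B_s)$, not the lower bound you assert. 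To make your count work you must show that in degrees $\le r+s-2$ no extra initial forms occur, i.e.\ that any combination $uA+vB$ whose leading terms cancel already has order $\ge r+s$; this is precisely where $\gcd(A_r,B_s)=1$ is used (cancellation forces $B_s\mid\mathrm{in}(u)$, hence $\ord(u)\ge s$). Alternatively, Fulton gets $I_P\ge rs$ unconditionally by bounding the kernel of $k[X,Y]/\mathfrak m^{r+s}\to\O_P/((A,B)+\mathfrak m^{r+s})$ by the dimension of the image of $(u,v)\mapsto uA+vB$ on $k[X,Y]/\mathfrak m^{s}\times k[X,Y]/\mathfrak m^{r}$.

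The second problem is the one you flag yourself and then defer to ``standard proofs'': the whole content of the equality case is the inclusion $\mathfrak m^{r+s-1}\subseteq(A,B)\O_P$, and this is exactly where the no-common-factor hypothesis does its work, so it cannot be left out. It is not long: since $A_r,B_s$ are coprime forms of degrees $r,s$, the quotient $k[X,Y]/(A_r,B_s)$ vanishes in all degrees $t\ge r+s-1$, so every form $M$ of degree $t$ equals $\alpha A_r+\beta B_s$ with $\alpha,\beta$ forms of degrees $t-r,t-s$, whence $M-\alpha A-\beta B\in\mathfrak m^{t+1}$. Thus $\bar{\mathfrak m}^{t}=\bar{\mathfrak m}^{t+1}=\bar{\mathfrak m}\cdot\bar{\mathfrak m}^{t}$ in $\O_P/(A,B)$, and Nakayama gives $\bar{\mathfrak m}^{r+s-1}=0$. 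This also makes the finite-length verification you propose to ``check separately'' automatic, and combined with the (corrected) lower bound yields $\dim_k\O_P/(A,B)=\dim_k\O_P/(\mathfrak m^{r+s-1}+(A,B))=rs$. Until these two steps are actually carried out, what you have is an accurate outline of the known proof rather than a proof.
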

\par
It is an easy exercise to obtain the following result as a corollary of Lemma~\ref{lem:intersection_number_and_multiplicity} (see Janwa, McGuire, and Wilson~\cite[Proposition 2]{JanMcGWil1995}).
\begin{corollary}
\label{cor:intersection_number_m_m1_coprime}
Let $F$ be an affine plane curve and suppose that $F=AB$. Let $P=(u,v)$ be a point in the plane and write
\[
F(X+u,Y+v)=F_m(X,Y)+F_{m+1}(X,Y)+\cdots,
\]
where $F_i$ is zero or homogeneous of degree $i$ and $F_m\ne 0$. Let~$L$ be a linear polynomial and suppose that $F_m=L^m$ and $L\nmid F_{m+1}$. Then $I_P(A,B)=0$.
\end{corollary}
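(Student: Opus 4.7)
The plan is to show that under the hypotheses one of $A,B$ actually misses the point $P$, which then forces $I_P(A,B)=0$ by the standard vanishing property of the intersection number when $P$ is not on both curves.

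First I would introduce the multiplicities at $P$. Set $a=m_P(A)$ and $b=m_P(B)$. Since $F=AB$, expanding $A(X+u,Y+v)$ and $B(X+u,Y+v)$ around $P$ and multiplying gives $m_P(F)=a+b$, and the tangent cone of $F$ at $P$ factors as the product of the tangent cones of $A$ and $B$. Thus $a+b=m$ and, writing $A_a, B_b$ for the respective tangent cones, we have $A_aB_b=F_m=L^m$.

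Next, because $L$ is linear and hence irreducible in the polynomial ring, the factorisation $A_aB_b=L^m$ forces $A_a=c_1L^a$ and $B_b=c_2L^b$ for some scalars with $c_1c_2=1$. Expanding the product $AB$ one degree further, the degree-$(m+1)$ part of $F(X+u,Y+v)$ is
\[
F_{m+1}=A_aB_{b+1}+A_{a+1}B_b=c_1L^aB_{b+1}+c_2A_{a+1}L^b.
\]
If both $a\ge 1$ and $b\ge 1$, then $L$ divides each summand and hence divides $F_{m+1}$, contradicting the assumption $L\nmid F_{m+1}$. Therefore $a=0$ or $b=0$, meaning $P$ lies on at most one of the curves $A,B$. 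By the basic property of intersection numbers this yields $I_P(A,B)=0$.

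There is no real obstacle here; the whole argument is a short computation with tangent cones, and the key observation is simply that if $P$ were singular on both $A$ and $B$, the shared tangent $L$ would propagate into the next homogeneous piece $F_{m+1}$, which is exactly what the hypothesis forbids. Lemma~\ref{lem:intersection_number_and_multiplicity} is the conceptual ancestor (it would give $I_P(A,B)=ab$ in the coprime tangent-cone case), but for this corollary we only need the trivial direction that $I_P$ vanishes whenever one factor avoids $P$.
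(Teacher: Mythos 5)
Your proof is correct and is essentially the argument the paper has in mind: the paper leaves this as an ``easy exercise'' from Lemma~\ref{lem:intersection_number_and_multiplicity}, and your computation (tangent cones multiply, so $A_a B_b=L^m$ forces $A_a=c_1L^a$, $B_b=c_2L^b$, and $L\nmid F_{m+1}$ then forces $a=0$ or $b=0$) is exactly the standard route, going back to Janwa--McGuire--Wilson. Whether one finishes by noting $P$ misses one of the curves or by applying Lemma~\ref{lem:intersection_number_and_multiplicity} with a constant tangent cone is immaterial.
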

\par
We shall require one further result to compute intersection numbers, whose proof idea follows that of~\cite[Lemma~8]{HerMcG2012}.
\begin{lemma}
\label{lem:intersection_number_linear_term}
Let $F$ be an affine plane curve over a field of characteristic~two and suppose that $F=AB.$ Let $P=(u,v)$ be a point in the plane and write
\[
F(X+u,Y+v)=F_m(X,Y)+F_{m+1}(X,Y)+\cdots,
\]
where $F_i$ is zero or homogeneous of degree $i$ and $F_m\ne 0$. Let~$L$ be a linear polynomial and suppose that $F_m=L^m$ and $L\parallel F_{m+1}$. Then $I_P(A,B)=0$ or~$m$.
\end{lemma}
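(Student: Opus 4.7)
The plan is to reduce to a convenient normal form, split according to the multiplicities of $A$ and $B$ at $P$, and then convert the claim into a power-series order computation via Lemma~\ref{lem:intersection_number_order}.

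After translating $P$ to the origin and applying a linear change of coordinates (both of which preserve multiplicities, intersection numbers, and the stated hypotheses), I may assume $P=(0,0)$ and $L=Y$, so that $F_m=Y^m$ and the condition $L\parallel F_{m+1}$ reads $Y\mid F_{m+1}$ and $Y^2\nmid F_{m+1}$. Write $a=m_P(A)$ and $b=m_P(B)$, so that $a+b=m$ and the tangent cones satisfy $A_a=\alpha Y^a$ and $B_b=\beta Y^b$ with $\alpha\beta=1$. If $a=0$ or $b=0$, then $P$ lies off one of $A,B$, and $I_P(A,B)=0$. Otherwise $a,b\ge 1$, and
\[
F_{m+1}=A_aB_{b+1}+A_{a+1}B_b=\alpha Y^aB_{b+1}+\beta Y^bA_{a+1}
\]
is divisible by $Y^{\min(a,b)}$. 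Hence $\min(a,b)\ge 2$ would contradict $L\parallel F_{m+1}$, so by the symmetry of the intersection number I may assume $a=1$ and $b=m-1$, which makes $P$ a simple point of $A$.

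Let $\widetilde A$ be the irreducible component of $A$ through $P$. By Lemma~\ref{lem:intersection_number_order}, $I_P(A,B)=\ord^{\widetilde A}_P(B)$. Since $\widetilde A$ is simple at $P$ with tangent $Y=0$, the formal implicit function theorem provides a parametrisation $Y=\phi(X)$ of $\widetilde A$ near $P$ with $\phi(X)=cX^e+\cdots$ for some $e\ge 2$, and $X$ may be taken as a uniformising parameter. Writing $b_{m,0}$ for the coefficient of $X^m$ in $B_m$, a term-by-term expansion of $B(X,\phi(X))$ shows that $\beta\phi(X)^{m-1}$ has order at least $2(m-1)$, that $B_m(X,\phi(X))$ equals $b_{m,0}X^m$ plus terms of order $\ge m+1$, and that every $B_i(X,\phi(X))$ with $i\ge m+1$ has order $\ge m+1$.

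It remains to extract the order-$m$ coefficient. From $F_{m+1}=Y(\alpha B_m+\beta Y^{m-2}A_2)$, the condition $Y^2\nmid F_{m+1}$ is equivalent to the non-vanishing of $(\alpha B_m+\beta Y^{m-2}A_2)|_{Y=0}$. For $m\ge 3$ this forces $b_{m,0}\ne 0$, and since $2(m-1)>m$ the expansion yields $\ord^{\widetilde A}_P(B)=m$. For $m=2$ the curve $B$ is itself simple at $P$ with tangent $Y=0$; the condition becomes $\alpha b_{2,0}+\beta a_{2,0}\ne 0$, and a direct substitution with $\phi(X)=(a_{2,0}/\alpha)X^2+\cdots$ (or of higher order if $a_{2,0}=0$) gives the coefficient of $X^2$ in $B(X,\phi(X))$ as $\alpha^{-1}(\alpha b_{2,0}+\beta a_{2,0})\ne 0$, so again $I_P(A,B)=m$. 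The main obstacle I expect is precisely this $m=2$ sub-case: both tangent cones equal $Y$, and the two lowest-degree contributions to $B(X,\phi(X))$ sit at the same order, so that the decisive information is the precise non-cancellation imposed by $L\parallel F_{m+1}$.
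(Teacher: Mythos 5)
Your proof is correct and follows essentially the same route as the paper's: reduce to $m_P(A)=1$, $m_P(B)=m-1$ via the factorisation of the tangent cone and the condition $L\parallel F_{m+1}$, then apply Lemma~\ref{lem:intersection_number_order} and compute $\ord^{\widetilde A}_P(B)$ along the branch of $A$ through $P$. The only cosmetic difference is that you use a formal parametrisation $Y=\phi(X)$ and extract the order-$m$ coefficient directly (including the delicate $m=2$ cancellation), whereas the paper works inside the local ring via the relation $y=x^2hg^{-1}$ and settles $m=2$ by contradiction.
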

\begin{proof}
Write
\begin{align*}
A(X+u,Y+v)&=A_r(X,Y)+A_{r+1}(X,Y)+\cdots
\intertext{and}
B(X+u,Y+v)&=B_s(X,Y)+B_{s+1}(X,Y)+\cdots,
\end{align*}
where $A_i$ and $B_i$ are zero or homogeneous of degree $i$ and $A_r$ and $B_s$ are nonzero. Since $F_m=L^m$, we have, up to constant factors, $A_r=L^j$ and $B_s=L^{m-j}$ for some $j\in\{0,\dots,m\}$. Also, 
\begin{equation}
F_{m+1}=A_rB_{s+1}+A_{r+1}B_s,   \label{eqn:F_m1_AB}
\end{equation}
and since $L\parallel F_{m+1}$, we find that $\gcd(A_r,B_s)=1$ or~$L$. If $\gcd(A_r,B_s)=1$, then either $m_P(A)=0$ or $m_P(B)=0$ and $I_P(A,B)=0$ by Lemma~\ref{lem:intersection_number_and_multiplicity}.
\par
Now suppose that $\gcd(A_r,B_s)=L$, which implies that $m\ge 2$. Without loss of generality, we may assume that $A_r=L$ and $B_s=L^{m-1}$, so that $r=1$ and $s=m-1$. Define
\[
C(X,Y)=A(X,Y)L(X-u,Y-v)^{m-2}+B(X,Y).
\]
Then, by a general property of intersection numbers~\cite[Ch.~3, Property~(7)]{Ful2008}, we find that
\[
I_P(A,B)=I_P(A,C).
\]
We have
\[
C(X+u,Y+v)=A_2(X,Y)L(X,Y)^{m-2}+B_m(X,Y)+\text{higher order terms}.
\]
If $m=2$, then it follows from $L\parallel F_{m+1}$ and~\eqref{eqn:F_m1_AB} that $L\nmid A_2+B_2$. If $m>2$, we find from~\eqref{eqn:F_m1_AB} that $L\nmid B_m$. In either case, the tangent cones of $A$ and $C$ do not share a common factor and therefore $I_P(A,C)=m_P(A)m_P(C)$ by Lemma~\ref{lem:intersection_number_and_multiplicity}. This completes the proof since $m_P(A)=1$ and $m_P(C)=m$.
\end{proof}


\subsection{Singular points at infinity of $H_{t,a}$}

We now study the intersection numbers $I_P(A,B)$, where $H_{t,a}=AB$ is some factorisation and $P$ is a singular point at infinity of $H_{t,a}$, namely a point of the form $(u,v,0)$. Since $H_{t,a}$ is symmetric in $X$ and $Y$, we can assume that $v=1$. It is then sufficient to consider the dehomogenisation
\[
G_{t,a}(X,Z)=H_{t,a}(X,1,Z),
\]
so that
\[
G_{t,a}(X,Z)=\frac{(X+Z)^t+(Z+1)^t+X^t+a(X+1)Z^{t-1}+1}{Z(X+1)}.
\]
The result of this section is the following.
\begin{lemma}
\label{lem:I_infinity}
Let $t$ be a number of the form $2^k\ell+1$ for integers $k\ge 1$ and odd $\ell\ge 3$. Let $P=(u,0)$ be a singular point of $G_{t,a}$ and suppose that $G_{t,a}=AB$ is a factorisation over $\F$. Then $I_P(A,B)\le 4^{k-1}$.
\end{lemma}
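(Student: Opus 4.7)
The plan is to work locally at $P = (u, 0)$ by expanding the numerator $N(X, Z) = (X+Z)^t + (Z+1)^t + X^t + 1 + a(X+1)Z^{t-1}$ (so that $G_{t,a} = N/(Z(X+1))$) in total $(X, Z)$-degree, and then to invoke Corollary \ref{cor:intersection_number_m_m1_coprime} or Lemma \ref{lem:intersection_number_and_multiplicity}. Since $G_{t,a}(X, 0) = (X + 1)^{2^k - 1} M(X)^{2^k}$ with $M(X) = (X^\ell + 1)/(X + 1)$, any singular $u$ must satisfy $u^\ell = 1$, hence $u^{t-1} = 1$. Because $t = 2^k \ell + 1$ has zero bits in binary positions $1, \dots, k - 1$, Lucas's theorem (Lemma \ref{lem:Lucas}) gives $\binom{t}{d} \equiv 0 \pmod 2$ for $2 \le d \le 2^k - 1$. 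Combining this with the homogeneous degree-$d$ identity
\[
N_d(X,Z) = \binom{t}{d}\bigl[u^{t-d}\bigl((X+Z)^d + X^d\bigr) + Z^d\bigr]
\]
(valid for $1 \le d \le t - 2$) and the Frobenius relation $(X+Z)^{2^k} = X^{2^k} + Z^{2^k}$, one would check $N_d = 0$ for $1 \le d \le 2^k - 1$ and $N_{2^k} = (u^{t-2^k} + 1) Z^{2^k}$.

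I would then split according to whether $u^{t-2^k} = 1$, which (after a short manipulation using $u^\ell = 1$) is equivalent to $u \in \F_{2^k}$. If $u \notin \F_{2^k}$, then $u \ne 1$, $v := u + 1$ is invertible, and $N_{2^k}$ is a nonzero multiple of $Z^{2^k}$, forcing the tangent cone of $G_{t,a}$ at $P$ to be a scalar multiple of $Z^{2^k - 1}$. One more degree of expansion shows $N_{2^k + 1}$ contributes a $u^{t-2^k-1} X^{2^k} Z$ term, so the degree-$2^k$ part of $G_{t,a}$ has a nonzero $X^{2^k}$ coefficient at $Z = 0$; hence $Z \nmid F_{m+1}$ and Corollary \ref{cor:intersection_number_m_m1_coprime} gives $I_P(A, B) = 0$. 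If instead $u \in \F_{2^k}$, then $N_{2^k} = 0$ and further expansion gives
\[
N_{2^k + 1} = u^{t - 2^k - 1} \, XZ\bigl(X^{2^k - 1} + Z^{2^k - 1}\bigr) + \bigl(u^{t - 2^k - 1} + 1\bigr) Z^{2^k + 1}.
\]
For $u = 1$ this reduces to $XZ(X^{2^k - 1} + Z^{2^k - 1})$, so dividing by $Z\cdot X$ (note $X + v = X$ here) gives the tangent cone $X^{2^k - 1} + Z^{2^k - 1} = \prod_{\zeta \in \F_{2^k}^*}(X + \zeta Z)$, a product of $2^k - 1$ distinct linear factors. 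For $u \in \F_{2^k}^* \setminus \{1\}$, the identities $u^{t-1} = 1$ and $u^{2^k} = u$ give $u^{t-2^k-1} = u^{-1}$, and a short computation produces the tangent cone (up to a nonzero scalar) $X^{2^k} + X Z^{2^k - 1} + v Z^{2^k}$; setting $W = X/Z$ this is the Artin--Schreier equation $W^{2^k} + W = v$ whose solution set in $\F$ is an $\F_{2^k}$-coset of size $2^k$, so the tangent cone factors into $2^k$ distinct linear factors.

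Thus in both subcases of $u \in \F_{2^k}$ the tangent cone of $G_{t,a}$ at $P$ is a product of $m_P \le 2^k$ pairwise distinct linear factors. For any factorisation $G_{t,a} = AB$ the tangent cones of $A$ and $B$ are complementary subproducts of these distinct linears, hence coprime; Lemma \ref{lem:intersection_number_and_multiplicity} then gives $I_P(A, B) = m_P(A) m_P(B)$, and AM--GM delivers $I_P(A, B) \le (m_P/2)^2 \le 4^{k-1}$. The hardest step I expect is identifying the Artin--Schreier form of the tangent cone in the subcase $u \in \F_{2^k}^* \setminus \{1\}$: it depends on computing the degree-$(2^k + 1)$ part of $N$ correctly and exploiting the hypothesis $u^{2^k} = u$ to extract the precise constant $v$ on the right-hand side; without this identification one obtains only the weaker bound $I_P \le m_P^2 = 4^k$.
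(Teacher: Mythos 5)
Your proposal is correct and follows essentially the same route as the paper: expand the numerator of $G_{t,a}$ at $P$, use Lucas's theorem to kill the homogeneous parts of degrees $2$ through $2^k-1$, split into the three cases $u\notin\F_{2^k}$, $u=1$, and $u\in\F_{2^k}^*\setminus\{1\}$ (the paper's types C, A, B), and conclude via Corollary~\ref{cor:intersection_number_m_m1_coprime} in the first case and Lemma~\ref{lem:intersection_number_and_multiplicity} plus $m_P(A)m_P(B)\le(2^{k-1})^2$ in the other two. The only cosmetic difference is that you establish squarefreeness of the tangent cone by explicit (Artin--Schreier) factorisation and check $Z\nmid F_{m+1}$ by reading off the $X^{2^k}$ coefficient, where the paper derives both facts from the claim that $\widetilde G_{2^k+1}$ has no repeated factors, proved via its partial derivatives.
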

\begin{proof}
Write $\widetilde G_{t,a}$ for the numerator of $G_{t,a}$, namely
\begin{equation}
\widetilde G_{t,a}(X,Z)=Z(X+1)G_{t,a}(X,Z).   \label{eqn:def_numerator_G}
\end{equation}
Next we compute the multiplicities of $G_{t,a}$ and $\widetilde G_{t,a}$ at $P$. Write
\begin{align*}
G_{t,a}(X+u,Z)&=G_0(X,Z)+G_1(X,Z)+G_2(X,Z)+\cdots\\
\intertext{and}
\widetilde G_{t,a}(X+u,Z)&=\widetilde G_0(X,Z)+\widetilde G_1(X,Z)+\widetilde G_2(X,Z)+\cdots,
\end{align*}
where $G_i$ and $\widetilde G_i$ are either zero or homogeneous of degree $i$. From~\eqref{eqn:def_numerator_G} we find that
\begin{equation}
\widetilde G_i(X,Z)=XZG_{i-2}(X,Z)+Z(u+1)G_{i-1}(X,Z),   \label{eqn:GiFi_infinity}
\end{equation}
where, by convention, $G_{-1}=G_{-2}=0$. We have
\[
\widetilde G_{t,a}(X+u,Z)=\sum_{j=0}^{t}\binom{t}{j}\Big[u^{t-j}((X+Z)^j+X^j)+Z^j\Big]+a(X+u+1)Z^{t-1}+1.
\]
Since $P$ is a singular point of $G_{t,a}$, and so is a singular point of $\widetilde G_{t,a}$, we have $\widetilde G_0=\widetilde G_1=0$. From Lemma~\ref{lem:Lucas} we see that $\widetilde G_i=0$ for each $i\in\{2,\dots,2^k-1\}$. Furthermore, since $\ell\ge 3$,
\begin{equation}
\widetilde G_{2^k}(X,Z)=(u^{t-2^k}+1)Z^{2^k}   \label{eqn:H2k_infinity}
\end{equation}
and
\[
\widetilde G_{2^k+1}(X,Z)=u^{t-2^k-1}((X+Z)^{2^k+1}+X^{2^k+1})+Z^{2^k+1}.
\]
We now see that the multiplicity of $\widetilde G_{t,a}$ at $P=(1,0)$ is $2^k+1$, while  the multiplicity of $\widetilde G_{t,a}$ at $P=(u,0)$ for $u\ne 1$ can be either $2^k$ or $2^k+1$. Using~\eqref{eqn:GiFi_infinity}, it is then straightforward to work out the corresponding multiplicities of $G_{t,a}$. The results are summarised in Table~\ref{tab:sing_points_mult}.
\begin{table}[ht]
\centering
\caption{Multiplicities of $G_{t,a}$ and $\widetilde G_{t,a}$ at their singular points.}
\begin{tabular}{c|c|c|c}
\hline
Type  & Point $P$           & $m_P(\widetilde G_{t,a})$ & $m_P(G_{t,a})$\\ \hline\hline
A  & $(1,0)$           & $2^k+1$                   & $2^k-1$\\
B  & $(u,0)$, $u\ne 1$ & $2^k+1$                   & $2^k$\\
C  & $(u,0)$, $u\ne 1$ & $2^k$                     & $2^k-1$\\\hline
\end{tabular}
\label{tab:sing_points_mult}
\end{table}
\par
We shall need the following observation, which will be proved at the end of this section.
\begin{claim}
\label{cla:H2k1_splits_infinity}
$\widetilde G_{2^k+1}$ splits into $2^k+1$ distinct factors over its splitting field.
\end{claim}
\par
We resume the proof of Lemma~\ref{lem:I_infinity} and distinguish three cases for $P$, according to Table~\ref{tab:sing_points_mult}.
\begin{list}{$\bullet$}{\leftmargin=1.5em \itemindent=0em \itemsep=-0.3ex}
\item {\itshape $P$ is a point of type A.} In this case, the multiplicity of $G_{t,a}$ at $P$ is $2^k-1$ and from~\eqref{eqn:GiFi_infinity} we have 
\[
\widetilde G_{2^k+1}(X,Z)=XZG_{2^k-1}(X,Z).
\]
Therefore, by Claim~\ref{cla:H2k1_splits_infinity}, $G_{2^k-1}$, the tangent cone of $G_{t,a}$ at $P$, has no multiple factors over its splitting field. Lemma~\ref{lem:intersection_number_and_multiplicity} then implies $I_P(A,B)=m_P(A)m_P(B)$.

\item {\itshape $P$ is a point of type B.} In this case, the multiplicity of $G_{t,a}$ at $P$ is $2^k$ and from~\eqref{eqn:GiFi_infinity} we have
\[
\widetilde G_{2^k+1}(X,Z)=Z(u+1)G_{2^k}(X,Z).
\]
Thus by Claim~\ref{cla:H2k1_splits_infinity}, $G_{2^k}$, the tangent cone of $G_{t,a}$ at $P$, has no multiple factors over its splitting field and so Lemma~\ref{lem:intersection_number_and_multiplicity} gives $I_P(A,B)=m_P(A)m_P(B)$.

\item {\itshape $P$ is a point of type C.} Now the multiplicity of $G_{t,a}$ at $P$ is $2^k-1$. From~\eqref{eqn:GiFi_infinity} we find that
\begin{align*}
\widetilde G_{2^k}(X,Z)&=Z(u+1)G_{2^k-1}(X,Z)\\
\widetilde G_{2^k+1}(X,Z)&=XZG_{2^k-1}(X,Z)+Z(u+1)G_{2^k}(X,Z).
\end{align*} 
From~\eqref{eqn:H2k_infinity} we see that the tangent cone of $G_{t,a}$ at $P$ equals
\[
G_{2^k-1}(X,Z)=\frac{u^{t-2^k}+1}{u+1}Z^{2^k-1}
\]
and then, by Claim~\ref{cla:H2k1_splits_infinity}, $Z\nmid G_{2^k}$. Thus $I_P(A,B)=0$ by Corollary~\ref{cor:intersection_number_m_m1_coprime}.
\end{list}
Now from the three cases above we conclude that $I_P(A,B)$ equals either zero or $m_P(A)m_P(B)$. But since
\[
m_P(A)+m_P(B)=m_P(G_{t,a})\le 2^k,
\]
we find that $I_P(A,B)\le (2^{k-1})^2$, as required.
\end{proof}
\par
It remains to prove the claim invoked in the proof of Lemma~\ref{lem:I_infinity}.
\begin{proof}[Proof of Claim~\ref{cla:H2k1_splits_infinity}]
We show that
\begin{align}
\gcd(\widetilde G_{2^k+1},\partial \widetilde G_{2^k+1}/\partial X) & \in\F[Z]
\label{eqn:gcd_X}
\intertext{and}
\gcd(\widetilde G_{2^k+1},\partial \widetilde G_{2^k+1}/\partial Z) & \in\F[X].   \label{eqn:gcd_Z}
\end{align}
The assertion~\eqref{eqn:gcd_X} follows since
\[
\partial \widetilde G_{2^k+1}/\partial X=(u^{\ell-1}Z)^{2^k}.
\]
To prove~\eqref{eqn:gcd_Z}, first observe that $P=(0,0)$ is not a singular point of $\widetilde G_{t,a}$ since then $\widetilde G_1(X,Z)=Z$, and so it is not a singular point of $G_{t,a}$. Hence we may assume that $u\ne 0$. We have
\[
\partial \widetilde G_{2^k+1}/\partial Z=(u^{\ell-1}X+(u^{\ell-1}+1)Z)^{2^k}.
\]
Hence $\partial \widetilde G_{2^k+1}/\partial Z$ has only one factor, namely
\begin{equation}
X+\frac{u^{\ell-1}+1}{u^{\ell-1}}Z.   \label{eqn:factor_G2k1}
\end{equation}
We readily verify that
\[
\widetilde G_{2^k+1}\bigg(\frac{u^{\ell-1}+1}{u^{\ell-1}}Z,Z\bigg)=u^{(\ell-1)(2^k-1)}(u^{\ell-1}+1)Z^{2^k+1}.
\]
Hence, since $u\ne 0$,~\eqref{eqn:factor_G2k1} divides $\widetilde G_{2^k+1}$ only if $u=1$. However, for $u=1$,~\eqref{eqn:factor_G2k1} equals $X$, which proves~\eqref{eqn:gcd_Z}.
\end{proof}


\subsection{Affine singular points of $H_{t,a}$}

We are now interested in the intersection numbers $I_P(A,B)$, where $H_{t,a}=AB$ and $P$ is an affine singular point of $H_{t,a}$, namely $P$ is of the form $(u,v,1)$. We work with the dehomogenisation
\[
F_{t,a}(X,Y)=H_{t,a}(X,Y,1),
\]
as given in~\eqref{eqn:def_F}. Let $\widetilde F_{t,a}(X,Y)$ be the numerator of $F_{t,a}(X,Y)$, so that
\[
\widetilde F_{t,a}(X,Y)=(X+1)^t+(Y+1)^t+X^t+Y^t+a(X+Y).
\]
\par
Our analysis crucially relies on restricting $a$ to values in a subset of $\A_n$, which we define next.
\begin{definition}
\label{def:B}
Let $\B_n$ be the set of all $a\in\A_n$ such that all singular points $(u,v)$ of $\widetilde F_{t,a}$ satisfy each of
\begin{align*}
(u+1)^{t-2^k}&\ne u^{t-2^k}\\
(u+1)^{t-2^k-1}&\ne u^{t-2^k-1}\\
(v+1)^{t-2^k-1}&\ne v^{t-2^k-1}.
\end{align*}
\end{definition}
\par
\begin{lemma}
\label{lem:size_of_B}
The set $\B_n$ contains an element not equal to $1$ for all sufficiently large~$n$.
\end{lemma}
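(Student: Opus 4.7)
The plan is to show that $\mathcal{A}_n\setminus\mathcal{B}_n$ has size bounded by a constant depending only on $t$, while $|\mathcal{A}_n|$ grows without bound with $n$. The conclusion then follows by a cardinality count.

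First, I would exploit that $t=2^k\ell+1$ is odd. In characteristic two the partial derivatives of $\widetilde{F}_{t,a}$ simplify to
\[
\frac{\partial\widetilde{F}_{t,a}}{\partial X}=(X+1)^{t-1}+X^{t-1}+a,\qquad \frac{\partial\widetilde{F}_{t,a}}{\partial Y}=(Y+1)^{t-1}+Y^{t-1}+a,
\]
so at every singular point $(u,v)$ of $\widetilde{F}_{t,a}$ we have
\[
a=(u+1)^{t-1}+u^{t-1}=(v+1)^{t-1}+v^{t-1}.
\]
In particular the value of $a$ is completely determined by $u$ alone (and also by $v$ alone).

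Next, I would observe that each of the three forbidden equalities in Definition~\ref{def:B} is a univariate polynomial equation, namely $(U+1)^{t-2^k}+U^{t-2^k}=0$, $(U+1)^{t-2^k-1}+U^{t-2^k-1}=0$, and the analogue in $V$. In each case the top-degree terms cancel and the constant term equals $1$, so each is a nonzero polynomial of degree at most $t-2^k-1<t$ and hence has at most $t$ roots in $\F$. Combined with the previous step, each such root contributes at most one bad value of $a$, yielding the uniform bound
\[
|\mathcal{A}_n\setminus\mathcal{B}_n|\le 3t.
\]

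Finally, since $\mathcal{A}_n$ is a coset of the image of the power map $\varepsilon\mapsto\varepsilon^{2-t}$ on $\F_{2^n}^*$,
\[
|\mathcal{A}_n|=\frac{2^n-1}{\gcd(t-2,\,2^n-1)}\ge\frac{2^n-1}{t-2},
\]
which tends to infinity. Therefore $|\mathcal{B}_n|\ge|\mathcal{A}_n|-3t\ge 2$ for all sufficiently large $n$, and $\mathcal{B}_n$ must contain an element different from $1$.

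The argument is essentially a routine counting exercise; the only genuinely useful observation is in the first step, where the parity of $t$ collapses the singular-point conditions to two decoupled one-variable equations that force $a$ to be a function of $u$ (or $v$) alone. This collapse is precisely what keeps the bad set finite and independent of $n$.
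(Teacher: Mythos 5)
Your proof is correct and follows essentially the same route as the paper: the three forbidden conditions define finitely many bad values of $u$ (resp.\ $v$), the singular-point condition forces $a=(u+1)^{t-1}+u^{t-1}=(v+1)^{t-1}+v^{t-1}$, so at most a constant (in $n$) number of elements of $\A_n$ are excluded, while $\abs{\A_n}\ge(2^n-1)/(t-2)$ grows without bound. If anything, your version is slightly more careful than the paper's in using the $Y$-derivative to handle the condition on $v$ explicitly.
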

\begin{proof}
Let $\Pc$ be the set of points $(u,v)\in\F\times\F$ that satisfy at least one of
\begin{align*}
(u+1)^{t-2^k}+u^{t-2^k}&=0\\
(u+1)^{t-2^k-1}+u^{t-2^k-1}&=0\\
(v+1)^{t-2^k-1}+v^{t-2^k-1}&=0.
\end{align*}
Since $t-2^k$ is constant, we find by a degree argument that $\Pc$ has finite size. Then, by Definition~\ref{def:B}, $a\in\A_n$ belongs to $\B_n$ if no point in $\Pc$ is a singular point of $\widetilde F_{t,a}$. By looking at the homogeneous part of degree $1$ of $\widetilde F_{t,a}(X+u,Y+v)$, we see that a necessary condition for $(u,v)$ to be a singular point of $\widetilde F_{t,a}$ is 
\begin{equation}
(u+1)^{t-1}+u^{t-1}=a.   \label{eqn:cond_sing_point_u}
\end{equation}
But from the definition~\eqref{eqn:def_set_A} of $\A_n$ we have
\[
\abs{\A_n}=\frac{2^n-1}{\gcd(2^n-1,t-2)}\ge \frac{2^n-1}{t-2},
\]
and so, for all sufficiently large $n$, we can choose an $a\ne 1$ in $\A_n$ such that~\eqref{eqn:cond_sing_point_u} is not satisfied for each $(u,v)\in\Pc$. This $a\in\A_n$ belongs to $\B_n$ since none of the points in $\Pc$ is a singular point of $\widetilde F_{t,a}$.
\end{proof}
\par
We now state the main result of this section.
\begin{lemma}
\label{lem:I_affine}
Let $t$ be a number of the form $2^k\ell+1$ for integers $k\ge 1$ and odd $\ell\ge 1$ and let $a\in\B_n$. Suppose that $F_{t,a}=AB$ is a factorisation over~$\F$ and let $P$ be a singular point of $F_{t,a}$.
\begin{enumerate}[(i)]
\item If $P=(u,u)$, then $m_P(F_{t,a})=2^k-1$ and $I_P(A,B)=0$.
\item If $P=(u,v)$ with $u\ne v$, then $m_P(F_{t,a})=2^k$ and $I_P(A,B)=2^k$.
\end{enumerate}
\end{lemma}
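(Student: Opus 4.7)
The plan is to expand $\widetilde F_{t,a}$ around the singular point $P$ using Lucas's theorem (Lemma~\ref{lem:Lucas}), read off the tangent cone of $F_{t,a}$ at $P$ and its next homogeneous component, and then apply either Corollary~\ref{cor:intersection_number_m_m1_coprime} or Lemma~\ref{lem:intersection_number_linear_term}. Writing $c_j^{(u)}=(u+1)^{t-j}+u^{t-j}$ and analogously $c_j^{(v)}$, set $\alpha=c_{2^k}^{(u)}$, $\beta=c_{2^k}^{(v)}$, $\gamma=c_{2^k+1}^{(u)}$, $\delta=c_{2^k+1}^{(v)}$. Since $t=2^k\ell+1$ has binary digits equal to $1$ at positions $0$ and $k$ and equal to $0$ at positions $1,\dots,k-1$, Lucas's theorem forces $\binom{t}{j}$ to be even for $2\le j\le 2^k-1$; together with the singularity conditions $c_1^{(u)}=c_1^{(v)}=a$ and the condition that $P$ lies on the curve, this yields
\[
\widetilde F_{t,a}(X+u,Y+v)=\alpha X^{2^k}+\beta Y^{2^k}+\gamma X^{2^k+1}+\delta Y^{2^k+1}+(\text{higher order}),
\]
with no cross terms at these degrees. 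By the $\B_n$ conditions, $\alpha,\gamma,\delta$ are nonzero; in case (ii), the symmetry $\widetilde F_{t,a}(X,Y)=\widetilde F_{t,a}(Y,X)$ makes $(v,u)$ another singular point, so applying the $\B_n$ condition there yields $\beta\ne 0$ as well.

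For part (i), $s=u+v=0$ and $\widetilde F_{t,a}$ is divisible by $X+Y$ as a polynomial. Using $X^j+Y^j=(X+Y)\sigma_{j-1}(X,Y)$ with $\sigma_{j-1}=X^{j-1}+X^{j-2}Y+\cdots+Y^{j-1}$, together with $\sigma_{2^k-1}=(X+Y)^{2^k-1}$ by Frobenius, the lowest nontrivial homogeneous part of $F_{t,a}$ at $P$ is $\alpha(X+Y)^{2^k-1}$, giving $m_P(F_{t,a})=2^k-1$ with tangent cone $L^{2^k-1}$ for $L=X+Y$. The next homogeneous part is $\gamma\sigma_{2^k}(X,Y)$; evaluating at $Y=X$ yields $(2^k+1)\gamma X^{2^k}=\gamma X^{2^k}\ne 0$, so $L\nmid F_{2^k}$, and Corollary~\ref{cor:intersection_number_m_m1_coprime} gives $I_P(A,B)=0$. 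For $k=1$ the singularity condition at $(u,u)$ would force $\alpha=c_2^{(u)}=0$, contradicting $\B_n$, so part (i) is vacuous in that case.

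For part (ii), $s\ne 0$ so $F_{t,a}=\widetilde F_{t,a}/(X+Y+s)$ expands as a geometric series in $(X+Y)/s$ about $P$. Reading off the relevant homogeneous components gives
\[
F_{2^k}=\tfrac{1}{s}(\alpha X^{2^k}+\beta Y^{2^k}),\qquad F_{2^k+1}=\tfrac{1}{s}(\gamma X^{2^k+1}+\delta Y^{2^k+1})+\tfrac{1}{s^2}(X+Y)(\alpha X^{2^k}+\beta Y^{2^k}).
\]
Since $\alpha\ne 0$, a Frobenius rewriting yields $F_{2^k}=L^{2^k}$ with $L=(\alpha/s)^{1/2^k}X+(\beta/s)^{1/2^k}Y$, so $m_P(F_{t,a})=2^k$ with tangent cone $L^{2^k}$. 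Setting $\mu=(\beta/\alpha)^{1/2^k}$, so that $L$ is proportional to $X+\mu Y$ and $\alpha\mu^{2^k}=\beta$, I switch to $(L,Y)$-coordinates via $X=L+\mu Y$: the identity $\alpha(L+\mu Y)^{2^k}+\beta Y^{2^k}=\alpha L^{2^k}$ collapses the second summand of $F_{2^k+1}$ significantly, and a short computation identifies the coefficient of $L\cdot Y^{2^k}$ in $F_{2^k+1}$ as $\gamma\beta/(\alpha s)\ne 0$. This proves $L^2\nmid F_{2^k+1}$.

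To conclude the equality $I_P(A,B)=2^k$, I work under the hypothesis that both $A$ and $B$ pass through $P$ (in the contrary case $I_P(A,B)=0$ trivially). Then $m_P(A),m_P(B)\ge 1$, and the general inequality $I_P(A,B)\ge m_P(A)m_P(B)$ gives $I_P(A,B)\ge 1$. If $L\mid F_{2^k+1}$, combining with $L^2\nmid F_{2^k+1}$ gives $L\parallel F_{2^k+1}$, so Lemma~\ref{lem:intersection_number_linear_term} forces $I_P(A,B)\in\{0,2^k\}$; the lower bound $I_P(A,B)\ge 1$ then gives $I_P(A,B)=2^k$. If instead $L\nmid F_{2^k+1}$, Corollary~\ref{cor:intersection_number_m_m1_coprime} would force $I_P(A,B)=0$, contradicting the lower bound, so this subcase cannot arise under the standing hypothesis. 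The main delicate step is the coordinate change to $(L,Y)$ and the repeated use of $\alpha\mu^{2^k}=\beta$ to kill cross terms; the rest is bookkeeping with Lucas's theorem and Frobenius.
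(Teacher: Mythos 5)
Your argument is correct and follows the same skeleton as the paper's proof: expand $\widetilde F_{t,a}$ at $P$ via Lucas's theorem, identify $F_{2^k}$ as the $2^k$-th power of a linear form $L$, and feed the divisibility of $F_{2^k+1}$ by $L$ into Corollary~\ref{cor:intersection_number_m_m1_coprime} or Lemma~\ref{lem:intersection_number_linear_term}. The one place where you genuinely diverge is the verification that $L^2\nmid F_{2^k+1}$ in case (ii): the paper deduces this from Claim~\ref{cla:H2k1_splits_affine}, namely that $\widetilde F_{2^k+1}=\gamma X^{2^k+1}+\delta Y^{2^k+1}$ is squarefree, which needs only $\gamma,\delta\ne 0$ as supplied directly by Definition~\ref{def:B}; you instead change coordinates to $(L,Y)$ and exhibit the nonzero coefficient $\gamma\beta/(\alpha s)$ of $LY^{2^k}$. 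Your route needs the extra fact $\beta\ne 0$, which Definition~\ref{def:B} does not state, and your derivation of it from the symmetry of $\widetilde F_{t,a}$ (so that $(v,u)$ is also a singular point, to which the first condition of Definition~\ref{def:B} applies) is valid; the computation itself checks out. Finally, you are right to be cautious about the conclusion of (ii): what you actually prove is $I_P(A,B)\in\{0,2^k\}$, with $2^k$ occurring exactly when both components pass through $P$. This is also all that the paper's own proof establishes and all that is used later (Corollary~\ref{cor:sing_points}(iii) asserts only $I_P(A,B)\le 2^k$), since the bare equality claimed in the statement cannot hold when one of $A$, $B$ misses $P$.
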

\begin{proof}
We shall first compute the multiplicities of $F_{t,a}$ and $\widetilde F_{t,a}$ at $P=(u,v)$. Write
\begin{align*}
F_{t,a}(X+u,Y+v)&=F_0(X,Y)+F_1(X,Y)+F_2(X,Y)+\cdots\\
\intertext{and}
\widetilde F_{t,a}(X+u,Y+v)&=\widetilde F_0(X,Y)+\widetilde F_1(X,Y)+\widetilde F_2(X,Y)+\cdots,
\end{align*}
where $F_i$ and $\widetilde F_i$ are either zero or homogeneous of degree $i$. We have
\begin{multline}
\widetilde F_{t,a}(X+u,Y+v)=a(X+Y+u+v)\\
+\sum_{j=0}^t\binom{t}{j}\big(\big[(u+1)^{t-j}+u^{t-j}\big]X^j+\big[(v+1)^{t-j}+v^{t-j}\big]Y^j\big).   \label{eqn:F_expanded}
\end{multline}
Since $P$ is a singular point of $F_{t,a}$, and so is a singular point of $\widetilde F_{t,a}$, we have $\widetilde F_0=\widetilde F_1=0$. From Lemma~\ref{lem:Lucas} we see that $\widetilde F_i=0$ for each $i\in\{2,\dots,2^k-1\}$. Furthermore, 
\begin{equation}
\widetilde F_{2^k}(X,Y)=((u+1)^{t-2^k}+u^{t-2^k})X^{2^k}+((v+1)^{t-2^k}+v^{t-2^k})Y^{2^k}.   \label{eqn:H2k_affine}
\end{equation}
Since $a\in\B_n$, we see from Definition~\ref{def:B} that $\widetilde F_{2^k}$ is never zero and so
\begin{equation}
m_P(\widetilde F_{t,a})=2^k.   \label{eqn:mult_F_twiddle}
\end{equation}
To compute the multiplicity of $F_{t,a}$ at $P$, we use
\begin{equation}
\widetilde F_i(X,Y)=(X+Y)F_{i-1}(X,Y)+(u+v)F_i(X,Y),   \label{eqn:HiGi_affine}
\end{equation}
where, by convention, $F_{-1}=0$. We now prove the two cases of the lemma separately, using the following claim proved at the end of this section.
\begin{claim}
\label{cla:H2k1_splits_affine}
$\widetilde F_{2^k+1}$ splits into $2^k+1$ distinct factors over its splitting field.
\end{claim}
\par
\begin{list}{$\bullet$}{\leftmargin=1.5em \itemindent=0em \itemsep=-0.3ex}
\item {\itshape $P=(u,u)$.} In this case, we have $m_P(F_{t,a})=2^k-1$ by~\eqref{eqn:mult_F_twiddle} and~\eqref{eqn:HiGi_affine}. Furthermore, from~\eqref{eqn:HiGi_affine},
\begin{align*}
\widetilde F_{2^k}(X,Y)&=(X+Y)F_{2^k-1}(X,Y)\\
\widetilde F_{2^k+1}(X,Y)&=(X+Y)F_{2^k}(X,Y),
\end{align*}
and then from~\eqref{eqn:H2k_affine},
\[
F_{2^k-1}(X,Y)=((u+1)^{t-2^k}+u^{t-2^k})(X+Y)^{2^k-1}.
\]
By Claim~\ref{cla:H2k1_splits_affine}, $\widetilde F_{2^k+1}$ has no multiple factors over its splitting field, and so $X+Y$ does not divide $F_{2^k}$. Thus $I_P(A,B)=0$ by Corollary~\ref{cor:intersection_number_m_m1_coprime}.\\

\item {\itshape $P=(u,v)$ with $u\ne v$.} In this case, we have $m_P(F_{t,a})=2^k$ by~\eqref{eqn:mult_F_twiddle} and~\eqref{eqn:HiGi_affine}. From~\eqref{eqn:HiGi_affine} we have
\begin{align*}
\widetilde F_{2^k}&=(u+v)F_{2^k}\\
\widetilde F_{2^k+1}&=(X+Y)F_{2^k}+(u+v)F_{2^k+1}.
\end{align*}
Since $\widetilde F_{2^k+1}$ has no multiple factors by Claim~\ref{cla:H2k1_splits_affine}, we conclude that $F_{2^k}$ and $F_{2^k+1}$ share at most one factor. Furthermore, from~\eqref{eqn:H2k_affine}, we see that
\[
F_{2^k}(X,Y)=(a_1X+a_2Y)^{2^k}\quad\text{for some $a_1,a_2\in\F$}.
\]
If $a_1X+a_2Y$ does not divide $F_{2^k+1}$, then $I_P(A,B)=0$ by Corollary~\ref{cor:intersection_number_m_m1_coprime}, so assume that $F_{2^k}$ and $F_{2^k+1}$ share the factor $a_1X+a_2Y$. This factor must divide $F_{2^k+1}$ exactly and thus $I_P(A,B)=0$ or $2^k$ by Lemma~\ref{lem:intersection_number_linear_term}.
\end{list}
This completes the proof.
\end{proof}
\par
We now prove the claim invoked in the proof of Lemma~\ref{lem:I_affine}.
\begin{proof}[Proof of Claim~\ref{cla:H2k1_splits_affine}]
From~\eqref{eqn:F_expanded} we find that $\widetilde F_{2^k+1}(X,Y)$ equals
\[
((u+1)^{t-2^k-1}+u^{t-2^k-1})X^{2^k+1}+((v+1)^{t-2^k-1}+v^{t-2^k-1})Y^{2^k+1}.
\]
Since $a\in\B_n$, we readily verify with Definition~\ref{def:B} that
\[
\gcd(\widetilde F_{2^k+1},\partial\widetilde F_{2^k+1}/\partial X)=\gcd(\widetilde F_{2^k+1},\partial\widetilde F_{2^k+1}/\partial Y)=1.
\]
This proves the claim.
\end{proof}


\section{Proof of Theorem~\ref{thm:exceptional_monomials_odd}}
\label{sec:main_proof}

Let $t>1$ be an odd integer. Recall that, in view of Proposition~\ref{pro:irreducibility_implies_rational_points}, we wish to show that $F_{t,a}$, given in~\eqref{eqn:def_F} (or equivalently $H_{t,a}$, given in~\eqref{eqn:def_H}) has an absolutely irreducible factor over $\F_{2^n}$ for some $a\ne 1$ in $\A_n$ and for all sufficiently large $n$. 
\par
The case that $t=2^k+1$ is particularly easy to~handle.
\begin{proposition}
\label{pro:abs_ireducible_l1}
Let $t$ be a number of the form $2^k+1$ for integral $k\ge 1$. Then $F_{t,a}$ has an absolutely irreducible factor for some $a\ne 1$ in $\A_n$ and for all sufficiently large $n$.
\end{proposition}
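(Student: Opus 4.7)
The plan is to exploit the Frobenius-friendly structure of the exponent $t=2^k+1$ to compute $F_{t,a}$ in closed form, and then observe that it splits into linear factors.

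First, I would expand $(X+1)^{2^k+1}$ as $(X^{2^k}+1)(X+1)$ in characteristic two, giving a four-term polynomial. After cancellation,
\begin{equation*}
(X+1)^t+X^t=X^{2^k}+X+1.
\end{equation*}
Summing the analogous identity for $Y$, the numerator of $F_{t,a}$ collapses to
\begin{equation*}
X^{2^k}+Y^{2^k}+(1+a)(X+Y)=(X+Y)^{2^k}+(1+a)(X+Y),
\end{equation*}
which is visibly divisible by $X+Y$. Dividing, one obtains the remarkably simple identity
\begin{equation*}
F_{t,a}(X,Y)=(X+Y)^{2^k-1}+(1+a).
\end{equation*}

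Second, I would verify that $\A_n$ contains some $a\ne 1$ for all sufficiently large $n$. This is immediate from
\begin{equation*}
\abs{\A_n}=\frac{2^n-1}{\gcd(2^n-1,\,t-2)}=\frac{2^n-1}{\gcd(2^n-1,\,2^k-1)},
\end{equation*}
which tends to infinity with $n$; in particular $\A_n$ has more than one element once $n$ is large.

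Finally, for any such $a$, set $c=1+a\ne 0$ and $W=X+Y$. The univariate polynomial $W^{2^k-1}+c$ is separable: its derivative is $W^{2^k-2}$ (since $2^k-1$ is odd), and any repeated root would have to be $W=0$, which is excluded by $c\ne 0$. Hence it has $2^k-1$ distinct roots $\zeta_1,\dots,\zeta_{2^k-1}\in\F$ and
\begin{equation*}
F_{t,a}(X,Y)=\prod_{i=1}^{2^k-1}(X+Y+\zeta_i),
\end{equation*}
a product of linear, and therefore absolutely irreducible, factors over $\F$. This gives the desired absolutely irreducible factor.

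I do not foresee any genuine obstacle: once the binomial expansion is carried out the argument is essentially a one-line factorisation. The only arithmetic point worth stating carefully is the separability of $W^{2^k-1}+c$, which is what ensures that the resulting linear factors really are distinct (although for our purposes a single linear factor would already suffice). The heavy intersection-number machinery developed in Section~\ref{sec:intersection_numbers} is reserved for the harder case $\ell\ge 3$ of the main theorem.
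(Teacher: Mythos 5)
Your computation of $F_{t,a}(X,Y)=(X+Y)^{2^k-1}+(1+a)$ is correct and agrees with the paper, but the conclusion you draw from it has a genuine gap: the absolutely irreducible factor must be defined \emph{over $\F_{2^n}$}, not merely over the algebraic closure $\F$. That is what Proposition~\ref{pro:irreducibility_implies_rational_points} (via Weil's Theorem) requires, and it is the only reading under which the proposition has any content --- every nonconstant polynomial trivially has an absolutely irreducible factor over $\F$. Your linear factors $X+Y+\zeta_i$ are defined over $\F_{2^n}$ precisely when $\zeta_i\in\F_{2^n}$, i.e.\ when $1+a$ is a $(2^k-1)$-th power in $\F_{2^n}$, and this fails for most $a$ whenever $\gcd(2^k-1,2^n-1)>1$. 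Worse, if $1+a$ is not such a power then $F_{t,a}$ has \emph{no} $\F_{2^n}$-rational points at all, so your factorisation yields nothing about planarity. That the choice of $a$ genuinely matters is shown by Theorem~\ref{thm:planar_monomial_1}: for $n=2k$ the monomial $cx^{2^k+1}$ \emph{is} planar for suitable $c$, so for those parameters no $a\in\A_n$ can produce an $\F_{2^n}$-rational off-diagonal zero of $F_{t,a}$.

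The paper's proof spends essentially all of its effort on exactly this missing point: it must produce, for all sufficiently large $n$, some $a\ne 1$ in $\A_n$ with $a+1=b^{2^k-1}$ for some $b\in\F_{2^n}^*$. Unwinding the definition of $\A_n$, this amounts to finding $\e,x\in\F_{2^n}^*$ with $\e^{2^k-1}+x^{2^k-1}=1/c$, and the existence of such a solution is established by a Weil-type point count on this diagonal curve (the bound $2^n-(2^k-2)(2^k-3)2^{n/2}-2^k+2$ from~\cite{LidNie1997}), which is also where the hypothesis ``$n$ sufficiently large'' is actually used. Your separability observation is fine but beside the point; to repair the argument you need to add this existence step for a rational root $b$ of $W^{2^k-1}+(1+a)$.
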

\begin{proof}
Notice that $F_{t,a}$ simplifies to
\[
F_{t,a}(X,Y)=(X+Y)^{2^k-1}+a+1.
\]
We claim that, for all sufficiently large $n$, we can choose $a\ne 1$ in $\A_n$ such that
\[
a+1=b^{2^k-1}.
\] 
for some $b\in\F_{2^n}^*$. This will prove the proposition since then $X+Y+b$ divides $F_{t,a}$. By the definition~\eqref{eqn:def_set_A} of $\A_n$, the claim is equivalent to the existence of $\e,b\in\F_{2^n}^*$ such that, for all $c\in\F_{2^n}^*$,
\begin{equation}
\e^{1-2^k}/c+1=b^{2^k-1},   \label{eqn:eps_b}
\end{equation}
which in turn is equivalent to
\begin{equation}
\e^{2^k-1}+x^{2^k-1}=1/c,   \label{eqn:diag_eqn}
\end{equation}
where $x=\e b$. It is well known~\cite[Example~ 6.38]{LidNie1997} that the number of solutions $(\e,x)\in\F_{2^n}\times\F_{2^n}$ to the equation~\eqref{eqn:diag_eqn} is at least
\[
2^n-(2^k-2)(2^k-3)2^{n/2}-2^k+2.
\]
Since there are at most $2^k-1$ solutions of the form $(0,x)$ and at most $2^k-1$ solutions of the form $(\e,0)$, we find that, for all sufficiently large $n$, there exist $\e,x\in\F_{2^n}^*$ satisfying~\eqref{eqn:diag_eqn}. Hence, for all sufficiently large $n$, there exist $\e,b\in\F_{2^n}^*$ satisfying~\eqref{eqn:eps_b}, as required.
\end{proof}
\par
Henceforth, we assume that $t=2^k\ell+1$ for integers $k\ge 1$ and odd $\ell\ge 3$. We shall factor $H_{t,a}$ into putative factors $A$ and $B$ over some extension of $\F_{2^n}$ and derive a contradiction to Bezout's theorem, using our estimates for $I_P(A,B)$. Since $I_P(A,B)=0$ if $P$ is a simple point of $AB$, the sum in Bezout's Theorem can be taken over the singular points of~$AB$. The main results of Section~\ref{sec:intersection_numbers} can be restated as follows ((i) follows from Lemma~\ref{lem:I_infinity} and the remarks preceding it and (ii) and (iii) follow from Lemmas~\ref{lem:size_of_B} and~\ref{lem:I_affine}).
\begin{corollary}
\label{cor:sing_points}
Let $t$ be a number of the form $2^k\ell+1$ for integers $k\ge 1$ and odd $\ell\ge 3$. Let $P$ be a singular point of $H_{t,a}$ and suppose that $H_{t,a}=AB$ is a factorisation of $H_{t,a}$ over $\F$. Then, for some $a\ne 1$ in $\A_n$ and all sufficiently large $n$, the following holds:
\begin{enumerate}[(i)]
\item If $P=(u,v,0)$, then $I_P(A,B)\le 4^{k-1}$.
\item If $P=(u,u,1)$, then  $m_P(H_{t,a})=2^k-1$ and $I_P(A,B)=0$.
\item If $P=(u,v,1)$ and $u\ne v$, then $m_P(H_{t,a})=2^k$ and $I_P(A,B)\le 2^k$.
\end{enumerate}
\end{corollary}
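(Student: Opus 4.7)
My plan is that this corollary is essentially just a collation of Lemmas~\ref{lem:I_infinity},~\ref{lem:size_of_B}, and~\ref{lem:I_affine}, so the work reduces to (a)~selecting a single $a\ne 1$ for which all three assertions hold simultaneously, and (b)~translating the projective statement into the affine dehomogenizations used in those lemmas.

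First I would invoke Lemma~\ref{lem:size_of_B} to choose, for all sufficiently large $n$, some $a\in\B_n$ with $a\ne 1$; since $\B_n\subseteq\A_n$, this lies in $\A_n\setminus\{1\}$. This is the $a$ that will serve the whole statement. The key observation is that Lemma~\ref{lem:I_infinity} imposes no condition on $a$ whatsoever, while Lemma~\ref{lem:I_affine} requires exactly $a\in\B_n$, so this single choice handles all three parts.

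For part (i), given a singular point at infinity $P=(u,v,0)$, I would note that $(u,v)\ne(0,0)$ forces $u\ne 0$ or $v\ne 0$; and since $H_{t,a}$ is symmetric in $X$ and $Y$, applying the swap $X\leftrightarrow Y$ if necessary (which replaces the factorization $H_{t,a}=AB$ by $H_{t,a}=A'B'$ with $A'(X,Y,Z)=A(Y,X,Z)$, and leaves all intersection numbers unchanged) allows me to assume $v=1$. The point $(u,1,0)$ then corresponds to the singular point $(u,0)$ of the dehomogenization $G_{t,a}(X,Z)=H_{t,a}(X,1,Z)$, with the factorization $G_{t,a}(X,Z)=A(X,1,Z)\,B(X,1,Z)$. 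Because intersection numbers and multiplicities transfer between a projective curve and its affine dehomogenization (as recalled at the start of Section~\ref{sec:intersection_numbers}), Lemma~\ref{lem:I_infinity} yields $I_P(A,B)\le 4^{k-1}$ directly.

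For parts (ii) and (iii), an affine singular point $P=(u,v,1)$ dehomogenizes via $F_{t,a}(X,Y)=H_{t,a}(X,Y,1)$ to a singular point $(u,v)$ of $F_{t,a}$, with matching multiplicities and intersection numbers. Since $a\in\B_n$, Lemma~\ref{lem:I_affine} applies: in the case $u=v$ it gives $m_P(H_{t,a})=2^k-1$ and $I_P(A,B)=0$; in the case $u\ne v$ it gives $m_P(H_{t,a})=2^k$ together with $I_P(A,B)=2^k$, which is trivially $\le 2^k$. The only genuine subtlety in the whole argument is the bookkeeping in (i)---namely, checking that the $X\leftrightarrow Y$ symmetry plus the projective--affine dictionary really permit a direct appeal to Lemma~\ref{lem:I_infinity}---but this is immediate once one observes that $H_{t,a}$ is symmetric in $X$ and $Y$ and that the lemma has no restriction on $a$.
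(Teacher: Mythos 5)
Your proposal is correct and matches the paper's own treatment: the paper likewise disposes of this corollary in one line, citing Lemma~\ref{lem:I_infinity} together with the $X\leftrightarrow Y$ symmetry and dehomogenisation remarks preceding it for part~(i), and Lemmas~\ref{lem:size_of_B} and~\ref{lem:I_affine} (with a single choice of $a\ne 1$ in $\B_n\subseteq\A_n$) for parts~(ii) and~(iii). Your extra bookkeeping about transferring multiplicities and intersection numbers between $H_{t,a}$ and its dehomogenisations is exactly the convention the paper sets up at the start of Section~\ref{sec:intersection_numbers}.
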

\par
It remains to count the number of singular points of $H_{t,a}$. To do so, we consider the numerator of $H_{t,a}$, namely
\[
\widetilde H_{t,a}(X,Y,Z)=(X+Z)^t+(Y+Z)^t+X^t+Y^t+a(X+Y)Z^{t-1}.
\] 
Recall that a point $P$ on a projective plane curve defined by $H(X,Y,Z)$ is a singular point of $H$ if and only if the partial derivatives of $H$ with respect to $X$, $Y$, and $Z$ vanish at~$P$. Since $t$ is odd, we have 
\begin{align*}
\partial\widetilde H_{t,a}/\partial X&=(X+Z)^{t-1}+X^{t-1}+aZ^{t-1}\\
\partial\widetilde H_{t,a}/\partial Y&=(Y+Z)^{t-1}+Y^{t-1}+aZ^{t-1}\\
\partial\widetilde H_{t,a}/\partial Z&=(X+Z)^{t-1}+(Y+Z)^{t-1}.
\end{align*}
Recalling that $t=2^k\ell+1$ and $\ell\ge 3$, it is then readily verified that the possible singular points of $\widetilde H_{t,a}$ are of one of the following types:
\begin{list}{$\bullet$}{\leftmargin=1em \itemindent=0em \itemsep=0ex}
\item {\itshape Points at infinity:} $(u,1,0)$ satisfying $u^\ell=1$,
\item {\itshape Affine points:} $(u,v,1)$ satisfying
\begin{equation}
\left\{
\begin{aligned}
(u+1)^\ell&=u^\ell+a^{2^{-k}}\\
(v+1)^\ell&=v^\ell+a^{2^{-k}}\\
(u+1)^\ell&=(v+1)^\ell.
\end{aligned}   \label{eqn:cond_affine_sing_point}
\right.
\end{equation}
\end{list}
\par
\begin{lemma}
\label{lem:number_of_sing_points}
Let $t$ be a number of the form $2^k\ell+1$ for integers $k\ge 1$ and odd $\ell\ge 3$. Then, for each nonzero $a\in\F$, the curve $H_{t,a}$ has at most $\ell$ singular points at infinity and at most $(\ell-1)(\ell-2)/2$ affine singular points $(u,v,1)$ satisfying $u\ne v$.
\end{lemma}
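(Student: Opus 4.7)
The plan is to bound the two listed quantities separately, starting from the explicit characterization of singular points given just above the lemma.

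The count at infinity is immediate: the description recorded above the lemma says that singular points at infinity have the form $(u,1,0)$ with $u^\ell = 1$, and the equation $x^\ell - 1 = 0$ has at most $\ell$ roots in the algebraically closed field $\F$. For the affine count I write $c = a^{2^{-k}}$, so $c \ne 0$, and observe that the first two equations of \eqref{eqn:cond_affine_sing_point} say that both $u$ and $v$ are roots of
\[
P(x) = (x+1)^\ell + x^\ell + c,
\]
a polynomial of degree exactly $\ell - 1$ (the $x^\ell$ terms cancel, and the leading coefficient $\binom{\ell}{\ell-1}$ is nonzero because $\ell$ is odd). Adding the first two equations of \eqref{eqn:cond_affine_sing_point} gives $(u+1)^\ell + (v+1)^\ell = u^\ell + v^\ell$ in characteristic two, and the third equation makes the left-hand side zero, so $u^\ell = v^\ell$. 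Thus the task reduces to bounding the number of ordered pairs of distinct roots of $P$ with equal $\ell$-th powers.

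The structural observation driving the count is $P(x+1) = P(x)$: in characteristic two $(x+2)^\ell = x^\ell$, so the two $\ell$-th power summands simply swap. Consequently the set $R$ of distinct roots of $P$ is closed under the fixed-point-free involution $x \mapsto x+1$, giving $|R| = 2m$ for some integer $m \le (\ell-1)/2$. Define $\phi\colon R \to \F$ by $\phi(r) = r^\ell$; the relation $P(r) = 0$ rewrites as $\phi(r+1) = \phi(r) + c$, and since $c \ne 0$ the two elements of each pair $\{r, r+1\}$ lie in distinct fibres of $\phi$.

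The finish is a short combinatorial maximization. Partitioning $R$ into the fibres $S_1, \dots, S_k$ of $\phi$, the number I need is $\sum_i |S_i|(|S_i|-1)$. Each $S_i$ meets each of the $m$ pairs $\{r, r+1\}$ in at most one element, so $|S_i| \le m$, while $\sum_i |S_i| = 2m$. Under these constraints the sum is maximized by two classes of size exactly $m$, yielding at most $2m(m-1) \le (\ell-1)(\ell-3)/2 \le (\ell-1)(\ell-2)/2$. The only nonroutine step is spotting the $x \mapsto x+1$ symmetry of $P$; once this is identified, the degree bound and the pigeonhole argument above close the proof, so I do not foresee a serious obstacle.
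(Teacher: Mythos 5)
Your argument is correct, and it even yields the marginally sharper bound $(\ell-1)(\ell-3)/2$ for the affine count. The engine is the same one the paper uses: the translation symmetry $P(x+1)=P(x)$ of the degree-$(\ell-1)$ polynomial $P(x)=(x+1)^\ell+x^\ell+a^{2^{-k}}$, combined with the observation that $a\ne 0$ prevents $u$ and $u+1$ from both being compatible with a fixed $v$. The difference is in how the count is organised. The paper works directly with the ordered pairs $(u,v)$, $u\ne v$, satisfying the first two conditions of \eqref{eqn:cond_affine_sing_point} (at most $(\ell-1)(\ell-2)$ of them by the degree argument), and shows that the involution $(u,v)\mapsto(u+1,v)$ sends every genuine singular point to a candidate that is provably not singular, which halves the count in one stroke. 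You instead reduce the third condition to $u^\ell=v^\ell$, partition the root set $R$ of $P$ into fibres of $r\mapsto r^\ell$, note that each pair $\{r,r+1\}$ splits across two distinct fibres (your reformulation of the same non-degeneracy), and maximise the convex functional $\sum_i\lvert S_i\rvert(\lvert S_i\rvert-1)$ subject to $\lvert S_i\rvert\le m$ and $\sum_i\lvert S_i\rvert=2m\le\ell-1$. The fibre decomposition is what buys the improvement from $(\ell-1)(\ell-2)/2$ to $2m(m-1)\le(\ell-1)(\ell-3)/2$; the paper's pairing argument is shorter but coarser. Since the lemma is only used as an upper bound feeding into the Bezout estimates of Section~\ref{sec:main_proof}, either version suffices, and both proofs are complete.
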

\begin{proof}
First observe that each singular point of $H_{t,a}$ is also a singular point of $\widetilde H_{t,a}$. It is readily verified that $\widetilde H_{t,a}$ has at most $\ell$ singular points at infinity, thus $H_{t,a}$ has at most $\ell$ such singular~points.
\par
We now show that $\widetilde H_{t,a}$ has at most $(\ell-1)(\ell-1)/2$ affine singular points $(u,v,1)$ satisfying $u\ne v$. Let $a\in\F$ be nonzero. Since $\ell\ge 3$ is odd, the first two conditions of~\eqref{eqn:cond_affine_sing_point} are not trivially satisfied. Thus we find from a degree argument that there are exactly $(\ell-1)(\ell-2)$ pairs $(u,v)$ with $u\ne v$ that satisfy the first two conditions of~\eqref{eqn:cond_affine_sing_point}. Notice that, if $(u,v)$ is such a pair, then $(u+1,v)$ also satisfies the first two conditions of~\eqref{eqn:cond_affine_sing_point}. Now let $(u,v,1)$ be a singular point of $\widetilde H_{t,a}$, so that the pair $(u,v)$ satisfies~\eqref{eqn:cond_affine_sing_point}. We claim that $(u+1,v,1)$ is not a singular point of $\widetilde H_{t,a}$, for if $(u+1,v)$ satisfies all three conditions of~\eqref{eqn:cond_affine_sing_point}, then $(u+1)^\ell=u^\ell$, which implies $a=0$ and so contradicts our assumption that $a$ is nonzero. Hence there are at most $(\ell-1)(\ell-2)/2$ affine singular points on $H_{t,a}$. 
\end{proof}
\par
We now show that $H_{t,a}$ has an absolutely irreducible factor for some $a\ne 1$ in $\A_n$ and all sufficiently large $n$.
\begin{proposition}
\label{pro:irreducible_factor_g}
Let $t$ be a number of the form $2^k\ell+1$ for integers $k\ge 1$ and odd $\ell\ge 3$. Then $H_{t,a}$ has an absolutely irreducible factor over $\F_{2^n}$ for some $a\ne 1$ in $\A_n$ and all sufficiently large $n$.
\end{proposition}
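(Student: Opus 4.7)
The plan is to argue by contradiction via Bezout's theorem, combining the intersection-number estimates of Corollary~\ref{cor:sing_points} with the singular-point count of Lemma~\ref{lem:number_of_sing_points}. By Lemma~\ref{lem:size_of_B}, for all sufficiently large $n$ I may fix some $a \ne 1$ in $\B_n \subseteq \A_n$; suppose for contradiction that $H_{t,a}$ has no absolutely irreducible factor defined over $\F_{2^n}$. Using the tangent-cone analysis of Section~\ref{sec:intersection_numbers}, the conditions defining $\B_n$ suffice to force squarefreeness of $H_{t,a}$, so that we may write $H_{t,a} = h_1 \cdots h_r$ as a product of distinct absolutely irreducible factors over $\F$.

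The Frobenius generator of $\mathrm{Gal}(\F/\F_{2^n})$ permutes $\{h_1, \dots, h_r\}$ preserving degrees, and by hypothesis no $h_i$ is defined over $\F_{2^n}$, so every orbit has size at least~$2$. In particular $r \ge 2$ and no $h_i$ has degree exceeding $D/2$, where $D := \deg H_{t,a} = 2^k\ell - 1$: a factor of degree greater than $D/2$ would admit a distinct Galois conjugate of the same degree also dividing $H_{t,a}$, forcing $2\deg h_i \le D$. Given this degree bound, a greedy-balancing argument partitions $\{h_1,\dots,h_r\}$ into non-empty $I \sqcup J$ yielding coprime factors $A = \prod_{i\in I} h_i$ and $B = \prod_{j\in J} h_j$ with $|\deg A - \deg B| \le \max_i\deg h_i$, and therefore $(\deg A)(\deg B) \ge (D^2 - (\max_i\deg h_i)^2)/4$. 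Bezout's theorem together with Corollary~\ref{cor:sing_points} then gives
\[
(\deg A)(\deg B) = \sum_P I_P(A,B) \le U := \ell\cdot 4^{k-1} + \tfrac12(\ell-1)(\ell-2)\cdot 2^k,
\]
while the same inequality applied to single-factor partitions $A = h_i$, $B = H_{t,a}/h_i$ yields the quadratic constraint $\deg h_i(D - \deg h_i) \le U$, sharpening the upper bound on $\max_i\deg h_i$. A direct arithmetic comparison then verifies that the resulting lower bound on $(\deg A)(\deg B)$ strictly exceeds $U$ for all $k \ge 2$ and $\ell \ge 3$, producing the desired contradiction.

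The main obstacle is the tight case $k = 1$, where the gap between $U$ and $D^2/4$ narrows and extremal Galois orbit configurations---most delicately a single orbit of three conjugate factors each of degree $D/3$, whose partition products reach only $2D^2/9$---cannot be excluded by the greedy bound alone. I would close this case by refining the intersection-number bookkeeping in two ways: first, noting that $H_{t,a}$ has no singular points at infinity when $k = 1$ (Type B singularities in the analysis of Lemma~\ref{lem:I_infinity} require $u^{2^k-1}=1$ with $u\ne 1$, which is impossible for $k = 1$), tightening $U$ to $(\ell-1)(\ell-2)$; second, exploiting the sharper dichotomy $I_P(A,B) \in \{0, 2^k\}$ from Lemma~\ref{lem:I_affine}(ii) applied across all subset-factorizations, each pairwise sum $\sum_P I_P(h_i, h_j) = \deg h_i\deg h_j$ must be compatible with an integer-valued allocation of $\{0, 2^k\}$-contributions at the affine singular points, imposing a divisibility-by-$2^k$ constraint that rules out any remaining problematic degree multisets for $\ell \ge 3$.
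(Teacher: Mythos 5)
Your proposal is correct in substance but takes a genuinely different route from the paper at both ends, and both of your nonobvious claims check out. For $k\ge 2$ the paper arranges the absolutely irreducible factors into three balanced blocks $C,D,R$ with $\deg C=\deg D=\delta$ and $\deg R=\rho\le (t-2)/3$, which yields the constant $2/9$; your Galois-orbit bound $\max_i\deg h_i\le D/2$ plus greedy balancing only gives $(\deg A)(\deg B)\ge 3D^2/16$, a weaker constant, but it still suffices: writing $q=2^k\ge 4$, one checks $q\ell(3\ell-4)\ge 8\ell^2-18\ell+16$ for $\ell\ge 3$, so $\tfrac{3}{16}(2^k\ell-1)^2>\ell 4^{k-1}+\tfrac12(\ell-1)(\ell-2)2^k$ in all cases (the tightest, $k=2$, $\ell=3$, gives $22.7>16$). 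For $k=1$ the paper instead sums three Bezout identities over the $C,D,R$ decomposition and grinds down to $\rho\le 1$; your route is different and, once completed, cleaner. Your claim (i) that $H_{t,a}$ has no singular points at infinity when $k=1$ is true and is not used by the paper: types A and C have $m_P(G_{t,a})=2^k-1=1$, hence are simple points of $H_{t,a}$, and a type B point would need $u\ne 1$ with $u^{\ell}=u^{t-2^k}=1$, forcing $u^{\gcd(2^k-1,\ell)}=u=1$. Your claim (ii) also holds: at an affine singular point with $u\ne v$ the multiplicity is $2$, so at most two factors $h_i,h_j$ pass through it, each simply, and applying Lemma~\ref{lem:I_affine} to the factorisation $h_i\cdot(F_{t,a}/h_i)$ gives $I_P(h_i,h_j)\in\{0,2\}$. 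Where you stop short is the final deduction, which you only assert; it is in fact immediate and you should spell it out: Bezout for each pair gives $\deg h_i\deg h_j=\sum_P I_P(h_i,h_j)\equiv 0\pmod 2$, so at most one $\deg h_i$ is odd; since every Frobenius orbit has size at least $2$ and conjugate factors have equal degree, an odd-degree factor would force a second one, so all degrees are even and $\deg H_{t,a}=2\ell-1$ would be even, a contradiction. (Alternatively, your counting bound $\sum_{i<j}\deg h_i\deg h_j\le(\ell-1)(\ell-2)<D^2/4$ also closes it.) Two minor points: squarefreeness of $H_{t,a}$ follows not from the defining conditions of $\B_n$ but from the finiteness of the singular locus (a repeated component would make infinitely many points singular), which is how the paper justifies coprimality; and your single-factor sharpening $\deg h_i(D-\deg h_i)\le U$ is never actually needed.
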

\par
To prove the proposition, we shall need one further standard result (see Hernando and McGuire~\cite[Lemma~10]{HerMcG2011}, for example).
\begin{lemma}
\label{lem:splitting_of_irreducible_polys}
Let $F\in\F_q[X_1,\dots,X_m]$ be a polynomial of degree $d$, irreducible over~$\F_q$. Then there exists a natural number $s\mid d$ such that, over its splitting field, $F$ splits into $s$ absolutely irreducible polynomials, each of degree $d/s$.
\end{lemma}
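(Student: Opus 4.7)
The plan is a standard Galois descent argument: factor $F$ into absolutely irreducible pieces over the algebraic closure, observe that the Frobenius permutes them, and use the hypothesis that $F$ is $\F_q$-irreducible to pin down this permutation to a single orbit. By unique factorization in $\overline{\F_q}[X_1,\dots,X_m]$, we can write $F = c\prod_{i=1}^r F_i$ with each $F_i$ absolutely irreducible and normalized so that the factorization is unique (for definiteness, pick a monomial order on $\F[X_1,\dots,X_m]$ and scale each $F_i$ so that its leading monomial has coefficient $1$).

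Let $\phi$ be the Frobenius $x\mapsto x^q$, acting on $\overline{\F_q}[X_1,\dots,X_m]$ by coefficient-wise application. Since $F\in\F_q[X_1,\dots,X_m]$, we have $\phi(F)=F$, and since $\phi$ respects the chosen monomial order it sends a normalized absolutely irreducible factor to another normalized absolutely irreducible factor. By uniqueness of factorization, $\phi$ therefore permutes $\{F_1,\dots,F_r\}$. Let $O$ be the $\phi$-orbit of $F_1$, and set $P=\prod_{G\in O}G$. Then $\phi(P)=P$, so $P\in\F_q[X_1,\dots,X_m]$, and $P$ divides $F$ in $\overline{\F_q}[X_1,\dots,X_m]$ hence in $\F_q[X_1,\dots,X_m]$ (standard descent for divisibility of polynomials). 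Because $F$ is irreducible over $\F_q$, this forces $|O|=r$, i.e., $\phi$ acts transitively on the factors.

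Transitivity immediately gives that all $F_i$ have equal degree $d/r$, so writing $s=r$ we get $s\mid d$ and each absolutely irreducible factor has degree $d/s$. For the splitting field, the stabilizer of $F_1$ inside the cyclic group generated by $\phi$ is generated by $\phi^s$, so the coefficients of $F_1$ (and hence of every $F_i$ by transitivity) are fixed by $\phi^s$ and therefore lie in $\F_{q^s}$. Conversely, if $F$ already split into absolutely irreducibles over a proper subfield $\F_{q^{s'}}$ with $s'<s$, then $\phi^{s'}$ would fix every $F_i$, contradicting that the $\phi$-orbit of $F_1$ has size $s$. Hence $\F_{q^s}$ is precisely the splitting field, completing the proof.

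The only technical point is ensuring that the Frobenius literally permutes the $F_i$ rather than permuting them up to scalars; this is handled once and for all by the monomial-order normalization above, after which every step is formal. The key input is simply that $F_q[X_1,\dots,X_m]$ is a UFD and that $\F_q$ equals the fixed field of $\phi$ acting on $\overline{\F_q}$.
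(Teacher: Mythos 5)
The paper does not prove this lemma at all: it is quoted as a standard result, with a pointer to Hernando and McGuire \cite[Lemma~10]{HerMcG2011}. Your Galois-descent argument is correct and is essentially the standard proof of that cited fact: after normalising leading coefficients with respect to a monomial order, the Frobenius permutes the absolutely irreducible factors over $\overline{\F_q}$; the product over one orbit descends to $\F_q[X_1,\dots,X_m]$ and divides $F$, so irreducibility of $F$ over $\F_q$ forces the action to be transitive; transitivity then gives equal degrees $d/s$, the divisibility $s\mid d$, and the identification of the splitting field as $\F_{q^s}$ via the stabiliser $\langle\phi^s\rangle$. The only point you gloss over is the possibility of repeated absolutely irreducible factors, but your own argument disposes of it: the orbit product $P$ is a nonconstant divisor of the irreducible $F$ in $\F_q[X_1,\dots,X_m]$, hence an associate of $F$, which simultaneously forces the factorisation to be squarefree and the orbit to exhaust all factors. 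No gap.
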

\par
\begin{proof}[Proof of Proposition~\ref{pro:irreducible_factor_g}]
If $H_{t,a}=AB$ is a nontrivial factorisation of $H_{t,a}$ and $A$ and $B$ are not relatively prime, then by definition, $\sum_P I_P(A,B)=\infty$. However, by Lemma~\ref{lem:number_of_sing_points} and Corollary~\ref{cor:sing_points}, $H_{t,a}$ has a finite number of singular points $P$, each having a finite intersection number $I_P(A,B)$. Hence we can assume that $A$ and $B$ are relatively prime, which allows us to use the conclusion of Bezout's Theorem.
\par
Write
\[
H_{t,a}=Q_1Q_2\cdots Q_r,
\]
where $Q_i$ is irreducible over $\F_{2^n}$. Let $d_i$ be the degree of $Q_i$. By Lemma~\ref{lem:splitting_of_irreducible_polys} there exist natural numbers $s_i$ such that $Q_i$ splits into $s_i$ absolutely irreducible factors over $\F$, each of degree $d_i/s_i$. If $s_i=1$ for some $i\in\{1,\dots,r\}$, then $H_{t,a}$ has an absolutely irreducible factor over $\F_{2^n}$ and we are done. Thus assume, for a contradiction, that $s_i>1$ for each $i\in\{1,\dots,r\}$.
\par
\par
We arrange the factors of $Q_i$ into three polynomials, $C_i$, $D_i$, and $R_i$, such that $\deg C_i=\deg D_i$ and such that $R_i=1$ if $s_i$ is even and $\deg R_i=d_i/s_i$ if $s_i$ is odd. Write $C=C_1\cdots C_r$, $D=D_1\cdots D_r$, and $R=R_1\cdots R_r$. Let $\delta$ be the degree of $C$ (and of $D$) and let $\rho$ be the degree of $R$. Since $CDR$ is a factorisation of $H_{t,a}$, which has degree $t-2$, we find that
\begin{equation}
2\delta+\rho=t-2,   \label{eqn:deg_CDR}
\end{equation}
and, since $s_i>1$,
\begin{equation}
\rho\le \frac{t-2}{3},   \label{eqn:deg_R}
\end{equation}
which gives
\[
(\deg CR)(\deg D)=(\delta+\rho)\delta=\frac{(2\delta+\rho)^2-\rho^2}{4}\ge \frac{2}{9}(t-2)^2.
\]
Bezout's theorem then gives
\begin{equation}
\sum_PI_P(CR,D)\ge \frac{2}{9}(t-2)^2.   \label{eqn:Bezout_kge2}
\end{equation}
On the other hand, we find from Lemma~\ref{lem:number_of_sing_points} and Corollary~\ref{cor:sing_points} that, for some $a\ne 1$ in $\A_n$ and for all sufficiently large $n$,
\[
\sum_P I_P(CR,D)\le \ell 4^{k-1}+\frac{(\ell-1)(\ell-2)}{2}2^k.
\]
This contradicts~\eqref{eqn:Bezout_kge2} for $k\ge 2$ since $\ell>1$.
\par
We now consider the case $k=1$, so that $t=2\ell+1$. Choose $a\ne 1$ in $\A_n$ and take $n$ sufficiently large so that the assertions of Corollary~\ref{cor:sing_points} hold. Since $k=1$, we find from Corollary~\ref{cor:sing_points} that all affine singular points of $H_{t,a}$ are of the form $(u,v,1)$ with $u\ne v$ and the multiplicity of such a singular point equals~$2$. Hence an affine singular point of $H_{t,a}$ can only be a point of at most two of the factors of $H_{t,a}$. Given two factors $F$ and $G$ of $H_{t,a}$, let $N_{FG}$ be the number of affine singular points of $H_{t,a}$ that are on both $F$ and~$G$. Then, by Corollary~\ref{cor:sing_points},
\begin{equation}
N_{CD}+N_{CR}+N_{DR}\le \frac{(\ell-1)(\ell-2)}{2}.   \label{eqn:number_sing_points_m2}
\end{equation}
Bezout's Theorem gives
\begin{align*}
\sum_PI_P(CD,R)&=2\delta\rho\\
\sum_PI_P(CR,D)&=(\delta+\rho)\delta\\
\sum_PI_P(DR,C)&=(\delta+\rho)\delta.
\end{align*}
We estimate the left hand sides using Lemma~\ref{lem:number_of_sing_points} and Corollary~\ref{cor:sing_points} and obtain
\begin{align*}
2(N_{CR}+N_{DR})+\ell & \ge 2\delta\rho\\
2(N_{CD}+N_{DR})+\ell & \ge (\delta+\rho)\delta\\
2(N_{CD}+N_{CR})+\ell & \ge (\delta+\rho)\delta.
\end{align*}
Summing these equations gives
\begin{align*}
2\delta^2+4\delta\rho&\le 4(N_{CD}+N_{CR}+N_{DR})+3\ell\le 2(\ell-1)(\ell-2)+3\ell,
\end{align*}
using~\eqref{eqn:number_sing_points_m2}. Since $t=2\ell+1$, we have from~\eqref{eqn:deg_CDR}
\[
\ell=\frac{2\delta+\rho+1}{2}
\]
and therefore
\begin{equation}
2\delta(2\rho+1)\le \rho(\rho-1)+6.   \label{eqn:alpha_bound}
\end{equation}
From~\eqref{eqn:deg_CDR} and~\eqref{eqn:deg_R} we conclude that $\delta\ge\rho$, so that
\[
2\rho(2\rho+1)\le \rho(\rho-1)+6
\]
or equivalently $\rho(\rho+1)\le 2$, forcing $\rho\le 1$. But, if $\rho=0$, then $t$ is even by~\eqref{eqn:deg_CDR}, a contradiction. Hence $\rho=1$ and then from~\eqref{eqn:alpha_bound} we find that $\delta=1$, giving $t=5$ by~\eqref{eqn:deg_CDR}. But $t=5$ cannot be written as $2\ell+1$ for odd $\ell$, which completes the proof.
\end{proof}
\par
Now our main result, Theorem~\ref{thm:exceptional_monomials_odd}, follows from Propositions~\ref{pro:irreducibility_implies_rational_points}, \ref{pro:abs_ireducible_l1}, and~\ref{pro:irreducible_factor_g}.


\section{Final remarks}

Since the submission of this paper, various new results have been obtained by other authors. Most notably, M\"uller and Zieve~\cite{MulZie2013} give a characterisation of low-degree planar monomials, thereby proving Conjecture~\ref{con:all_exceptional_planar_monomials} and providing a different proof of Theorem~\ref{thm:exceptional_monomials_odd}. New examples of planar functions, in particular planar binomials, have been found by Hu, Li, Zhang, Feng, and Ge in~\cite{HuLiZhaFenGe2013}.


\providecommand{\bysame}{\leavevmode\hbox to3em{\hrulefill}\thinspace}
\providecommand{\MR}{\relax\ifhmode\unskip\space\fi MR }
\providecommand{\MRhref}[2]{%
  \href{http://www.ams.org/mathscinet-getitem?mr=#1}{#2}
}
\providecommand{\href}[2]{#2}


\end{document}